\documentclass[12pt]{amsart}
\usepackage{amsthm}
\usepackage{amssymb}
\usepackage{amsmath}
\usepackage{color}
\usepackage{verbatim}
\textwidth=5in \textheight=7.5in

\theoremstyle{plain}
\newtheorem{proposition}{Proposition}
\newtheorem{theorem}[proposition]{Theorem}
\newtheorem{lemma}[proposition]{Lemma}
\newtheorem{corollary}[proposition]{Corollary}

\theoremstyle{definition}

\theoremstyle{definition}

\newtheorem{remark}[proposition]{Remark}

\numberwithin{equation}{section}

\numberwithin{proposition}{section}

\gdef\myletter{}

\let\savetheequation\theequation
\def\theequation{\savetheequation\myletter}

\usepackage{color}

\newcommand{\CC}{{\mathbb C}}

\newcommand{\RR}{{\mathbb R}}

\newcommand{\PP}{{\mathbb P}}

\newcommand{\NN}{{\mathbb N}}



	\renewcommand{\date}{\today}
	\def\bar{\overline}
	\def\hat{\widehat}
	
	\usepackage{xcolor}

	\makeatletter\def\SL@eqnlefttext #1{\hbox to 0pt{\kern 60pt 
			\llap{\SL@margintext{#1}\quad}\hss}}
	\makeatother
	
	\begin{document}

		\title[Random Weighted]{\bf Random Sums of Weighted Orthogonal Polynomials in $\CC^d$}
		
		\author{T. Bloom, D. Dauvergne and N. Levenberg*}
		\thanks{*Supported in part by Simons grant}
		
		\address{University of Toronto, Toronto, Ontario M5S 2E4 Canada}  
		\email{bloom@math.toronto.edu}
		
		\address{University of Toronto, Toronto, Ontario M5S 2E4 Canada}  
		\email{duncan.dauvergne@utoronto.ca}
		
		\address{Indiana University, Bloomington, IN 47405 USA}
		\email{nlevenbe@iu.edu}
		
		\begin{abstract}
We consider random polynomials of the form $G_n(z):= \sum_{|\alpha|\leq n} \xi^{(n)}_{\alpha}p_{n,\alpha}(z)$ where $\{\xi^{(n)}_{\alpha}\}_{|\alpha|\leq n}$ are i.i.d. (complex) random variables and $\{p_{n,\alpha}\}_{|\alpha|\leq n}$ form a basis for $\mathcal P_n$, the holomorphic polynomials of degree at most $n$ in $\CC^d$. In particular, this includes the setting where $\{p_{n,\alpha}\}$ are orthonormal in a space $L^2(e^{-2n Q} \tau)$, where $\tau$ is a compactly supported Bernstein-Markov measure and $Q$ is a continuous weight function. Under an optimal moment condition on the random variables $\{\xi^{(n)}_{\alpha}\}$, in dimension $d=1$ we prove convergence in probability of the zero measure to the weighted equilibrium measure, and in dimension $d \ge 2$ we prove convergence of zero currents.
		\end{abstract}

		\maketitle
		
		\section{Introduction} We consider random polynomials in $\CC^d, \ d\geq 1$, of degree at most $n$ of the form 
		\begin{equation}
			\label{probhb2}
			G_n(z):=\sum_{|\alpha|\leq n} \xi^{(n)}_{\alpha}p_{n,\alpha}(z)
			\end{equation}
		where $\{\xi^{(n)}_{\alpha}\}_{|\alpha|\leq n}$ is an array of i.i.d. (complex) random variables and the set $\{p_{n,\alpha}\}_{|\alpha|\leq n}$ forms a basis for $\mathcal P_n$, the holomorphic polynomials of degree at most $n$ in $\CC^d$. Here
		$$
		p_{n,\alpha}(z)=a_{n,\alpha}z^{\alpha}+\sum_{\beta \prec \alpha} c_{n,\beta}z^{\beta}=: a_{n,\alpha}\tilde p_{n,\alpha}(z)
		$$
		where $z^{\alpha}:=z_1^{\alpha_1} \cdots z_d^{\alpha_d}$ and $\prec$ denotes an ordering on the set $\NN^d = \{0, 1, \dots, \}^d$ of multi-indices $\alpha$ which satisfies $\alpha \prec \beta$ whenever $ |\alpha| := \alpha_1 + \cdots + \alpha_d < |\beta|$, and if $\alpha \prec \beta$, then $\alpha + e_j \prec \beta + e_j$ for any coordinate vector $e_j:=(0,...,1,...,0)$. For example, if $\alpha \prec \beta$ for $|\alpha| < |\beta|$ and $\prec$ is the standard lexicographic order on each of the sets $\{\alpha : |\alpha| = n\}$, then $\prec$ is a valid ordering. We will sometimes write $\{p_{n,\alpha}\}_{|\alpha|\leq n}=\{p_{n,j}\}_{j=1,...,m_n}$ where $m_n:=\binom{d+n}{d}=$dim$(\mathcal P_n$).
		
		The random variables $\xi^{(n)}_{\alpha}$ are defined on a probability space $(\Omega, \mathcal H, \mathbb P)$, and we will sometimes write $G_n(z,\omega),  \ \omega \in \Omega$.
		We always assume that the random variables are non-degenerate, i.e., the law of $\xi^{(n)}_{\alpha}$ is not supported on a single point. We are interested in studying the distribution of the zero set of $G_n$ as $n \to \infty$. 
		
		We extend some of the results in \cite{ba}, \cite{Bay}, \cite{blrp} and \cite{BBL} on asymptotics of normalized zero measures and currents associated to $G_n$. All our proofs and theorems will apply to random polynomials in $\CC^d$ for arbitrary $d \ge 1$, however we introduce the problems first in dimension one, where the problems are more classical and the results have a simpler interpretation in terms of classical potential theory.
		
		\subsection{Dimension one.} \qquad 
		The simplest example of the form \eqref{probhb2} is the Kac ensemble in $d=1$, given by
		$$
		H_n(z) = \sum_{j=0}^n \xi_j z^j.
		$$
		In the case when the $\xi_j$ are i.i.d.\ complex Gaussian random variables of mean $0$ and variance $1$, Hammersley \cite{H} showed that the zeros of $H_n$ concentrate on the unit circle. As one might expect, the asymptotic distribution of the zeros only depends on the $\xi_j$ through a moment condition. To this end, Ibragimov and Zaporozhets \cite{I} showed that the condition
		\begin{equation}
		\label{E:log11}
	\mathbb E(\log(1 + |\xi_0|)) < \infty
		\end{equation}
		is both necessary and sufficient for almost sure weak-$*$ convergence of the \textit{zero measure}  $\tfrac{1}{n} \sum_{i=1}^n \delta_{z_j}$, where $z_1,...,z_n$ are the zeros of $H_n$, to normalized acrlength measure $\tfrac{1}{2\pi} d \theta$ on the unit circle. 
		
		If instead we are interested only in convergence in probability of the zero measure, then the correct necessary and sufficient condition is a slightly weakened version of \eqref{E:log11}:
		\begin{equation}
			\label{E:log12}
			\mathbb P(\log(1 + |\xi_0|) > t) = o(t^{-1}).
		\end{equation}
		The necessity and sufficiency of this condition was shown in \cite{BD}. 
		
		A useful way to think about these results is through the lens of orthogonal polynomials and potential theory. From this point of view, we can think of $\{z^n, n \in \mathbb N\}$ as orthonormal polynomials in the space $L^2(\tau), \ \tau = \tfrac{1}{2\pi} d \theta$, and view these convergence results as saying that the zero measure of a random sum of orthonormal polynomials converges under condition \eqref{E:log12} to the equilibrium measure $\mu_K$ on the support $K$ of $\tau$, i.e.\ the unique probability measure supported on $K$ minimizing the logarithmic energy
		$$
		I(\mu) := \int \int \log \frac{1}{|z-t|} d \mu(z) d \mu(t).
		$$
		In other words, we can approximate the equilibrium measure by the zero measure of a \emph{random} sum of orthonormal polynomials.	With this in mind, a natural generalization of the Kac ensemble is to take the polynomial basis $p_{n, \alpha}$ in \eqref{probhb2} to be given by a more general collection of orthonormal polynomials. More precisely, in this work we allow $\{p_{n, j} : j = 0, \dots, n\}$ to be
		\begin{enumerate}
			\item orthonormal polynomials in $L^2(e^{-2nQ}\tau)$ for $\mathcal P_n$ where $p_{n, j}$ has degree $j$. Here $\tau$ is a probability measure supported on a regular compact set $K$, and $Q:K \to \mathbb R$ is a continuous real-valued weight function (external field). We impose the mild regularity assumption that $\tau$ is a {\it weighted Bernstein-Markov measure for $K,Q$}, or more generally
			\item an orthonormal basis in $L^2(e^{-2nQ}\tau_n)$ for $\mathcal P_n$ associated to a sequence of {\it asymptotically weighted Bernstein-Markov measures $\tau_n$ for $K,Q$}. 
		\end{enumerate}
		See Section \ref{S:prelim} for precise definitions. The weight $e^{-2 n Q}$ is chosen so that in the limit, we expect the zero measure $\mu_n$ of the random sum \eqref{probhb2} to converge to the weighted equilibrium measure $\mu_{K, Q}$, i.e.\ the probability measure minimizing the weighted energy
		$$
		I(\mu) + 2 \int Q d \mu
		$$
		among all probability measures supported on $K$. In the unweighted case $Q \equiv 0$, \cite{BD} showed that under assumption (1) above and condition \eqref{E:log12} on the random variables, $\mu_n$ converges in probability to the equilibrium measure in the weak* topology. 
		
		The behaviour of the $G_n$ when $Q \not\equiv 0$ is quite different. For example, when $Q \equiv 0$ the equilibrium measure $\mu$ is always supported on the outer boundary of the set $K$, whereas general weighted equilibrium measures can be quite arbitrary (e.g., see \cite[Section IV]{ST}). Moreover, when $Q \equiv 0$, the array $\{p_{n, i} : i \le n\}$ is simply a sequence $p_{n, i} = p_i$ of orthonormal polynomials, a point which is crucial to the proofs in \cite{BD, DD}.

		In the case of a non-zero external field, \cite{BBL} showed that $\mu_n$ converges weakly to $\mu$ under assumption \eqref{E:log12} whenever the distribution of the $\xi_j$ has no atoms. Our main theorem in dimension one removes this condition. 
		
		\begin{theorem} \label{T:mainth-dim1} Let $\{\xi^{(n)}_i, i \le n \in \mathbb N\}$ be an array of i.i.d. random variables satisfying \eqref{E:log12}; let $K\subset \CC^d$ be compact and regular; and let $Q$ be a continuous, real-valued function on $K$. Let $\{p_{n, i}\}_{i \le n}$ be a basis for $\mathcal P_n$ as in (1) or (2). For random polynomials $G_n$ as in \eqref{probhb2} with zero set $\{z_1, \dots, z_n\}$ we have
			$$
			\frac{1}{n} \sum_{i=1}^n \delta_{z_i} \to \mu_{K, Q}
			$$
			in probability in the weak* topology on measures on $\mathbb C$.
		\end{theorem}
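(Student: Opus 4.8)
The plan is to study the normalized logarithmic potential $u_n(z):=\tfrac1n\log|G_n(z)|$, which is subharmonic on $\CC$ and satisfies $\tfrac1{2\pi}\Delta u_n=\mu_n$, the normalized zero measure $\tfrac1n\sum_i\delta_{z_i}$ (of total mass $\deg G_n/n\le 1$). Since $\tfrac1{2\pi}\Delta V_{K,Q}=\mu_{K,Q}$ and the Laplacian is continuous on distributions, it suffices to prove that $u_n\to V_{K,Q}$ in probability in $L^1_{\mathrm{loc}}(\CC)$, together with $\deg G_n/n\to 1$ in probability (to rule out escape of mass). The deterministic backbone is the convergence
$$
\psi_n(z):=\frac1{2n}\log\sum_{i=1}^{m_n}|p_{n,i}(z)|^2\ \longrightarrow\ V_{K,Q}(z)
$$
locally uniformly on $\CC$ (here $m_n=n+1$, and $V_{K,Q}$ is continuous since $K$ is regular and $Q$ is continuous), which follows from the Siciak--Zaharjuta description of $V_{K,Q}$ together with the (asymptotically) weighted Bernstein--Markov property of $\tau$ (resp. $\tau_n$). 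Writing $G_n(z)=e^{n\psi_n(z)}\,Y_n(z)$ with $Y_n(z):=\sum_i\xi_i^{(n)}v_i(z)$ and $v(z):=\big(p_{n,i}(z)\big)_i\big/\big(\sum_j|p_{n,j}(z)|^2\big)^{1/2}$, a unit vector for every $z$, the problem reduces to showing $\tfrac1n\log|Y_n(z)|\to 0$ in the appropriate sense.

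For the upper bound, Cauchy--Schwarz gives $|Y_n(z)|\le\big(\sum_i|\xi_i^{(n)}|^2\big)^{1/2}\le\sqrt{m_n}\,\max_i|\xi_i^{(n)}|$ for \emph{all} $z$ simultaneously. Since $m_n$ is linear in $n$, condition \eqref{E:log12} yields $m_n\,\mathbb P\big(\log(1+|\xi_0|)>\epsilon n\big)\to 0$ for every $\epsilon>0$, hence $\tfrac1n\max_i\log^+|\xi_i^{(n)}|\to 0$ in probability; therefore $u_n\le\psi_n+o(1)\le V_{K,Q}+o(1)$ uniformly on compacta, in probability. For the total mass, note that in dimension one the coefficient of $z^n$ in $G_n$ is $\xi_n^{(n)}a_{n,n}$ with $a_{n,n}\ne 0$, and more generally $\deg G_n=n-k$ exactly when $\xi_n^{(n)}=\dots=\xi_{n-k+1}^{(n)}=0$ and $\xi_{n-k}^{(n)}\ne 0$; non-degeneracy of $\xi_0$ makes $\mathbb P\big(\deg G_n<(1-\epsilon)n\big)\le q^{\lceil\epsilon n\rceil}\to 0$, where $q=\mathbb P(\xi_0=0)<1$ (note also $\mathbb P(G_n\equiv 0)=q^{m_n}\to 0$). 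So $\deg G_n/n\to 1$ in probability.

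The lower bound is the crux. It suffices to prove that for every closed disk $B$,
$$
\frac1n\int_B\log^-|Y_n(z)|\,dA(z)\ \longrightarrow\ 0 \qquad\text{in probability}
$$
($dA$ planar Lebesgue measure, $\log^-t=\max(0,-\log t)$): combined with $\int_B\psi_n\to\int_B V_{K,Q}$ and the upper bound this gives $\int_B u_n\to\int_B V_{K,Q}$ in probability; then, along any subsequence, compactness of subharmonic functions (locally bounded above by the upper bound, and nondegenerate by the integral convergence) yields an $L^1_{\mathrm{loc}}$-limit $u\le V_{K,Q}$ with $\int_B u=\int_B V_{K,Q}$ for a dense family of disks, forcing $u=V_{K,Q}$ a.e.; taking Laplacians and invoking $\deg G_n/n\to 1$ upgrades vague convergence on $\CC$ to weak-$*$ convergence, and passing from subsequences back to probability is routine. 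Because $\xi_0$ may have an atom (in particular at $0$, whence $\mathbb P(G_n\equiv0)>0$), $\mathbb E\log^-|Y_n(z)|$ is typically infinite, so this estimate must be obtained in probability, not in expectation. The scheme is: truncate the $\xi_i^{(n)}$ at level $e^{\epsilon n}$ (the error controlled by \eqref{E:log12}) and restrict to an event of probability $\to1$ on which, say, two of the $\xi_i^{(n)}$ differ by a fixed $\rho_0>0$ and a positive proportion of them are nonzero; split the integral at the scale $|Y_n(z)|=e^{-\epsilon n}$, the part where $|Y_n|\ge e^{-\epsilon n}$ contributing at most $\epsilon|B|$ deterministically; and for the remaining part establish a small-ball bound on $\int_B\mathbb P\big(|Y_n(z)|<e^{-\epsilon n}\big)\,dA(z)$. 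The content of the latter is anti-concentration for $\sum_i\xi_i^{(n)}v_i(z)$ with $\xi_0$ merely non-degenerate: making $|Y_n(z)|$ exponentially small forces a conspiracy among the $\xi_i^{(n)}$ that is exponentially unlikely, except for $z$ in a set of small Lebesgue measure on which $v(z)$ is too concentrated; the size of that exceptional set is controlled using the orthonormality relations $\int_K|p_{n,i}|^2e^{-2nQ}\,d\tau=1$ (which prevent $\sum_i|p_{n,i}(z)|^2$ from being dominated by few indices off a small set) transported from $K$ to $B$ via the Bernstein--Markov property.

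I expect the lower bound, and within it precisely this small-ball estimate, to be the main obstacle, because three features that occur separately in the literature must be handled simultaneously: the coefficients may be atomic, so anti-concentration cannot be localized to a single coordinate and the all-zero configuration must be excised; the basis $\{p_{n,i}\}$ is a genuine triangular array rather than a single orthonormal sequence, so the spreading of $v(z)$ for most $z$ must be extracted from orthonormality plus Bernstein--Markov rather than from structural properties of a fixed orthogonal polynomial sequence; and \eqref{E:log12} is essentially optimal, which rules out any first-moment computation with $\log|G_n|$ and forces the whole argument to be carried out in probability after truncation. Reconciling these is the technical heart of the proof.
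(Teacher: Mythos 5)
Your architecture is essentially the paper's, reorganized: upper bound from Cauchy--Schwarz plus the moment condition, lower bound from anti-concentration of $Y_n(z)=\sum_i\xi_i^{(n)}v_i(z)$, which works only where the unit vector $v(z)$ is spread over many coordinates. (The paper does the lower bound pointwise at a countable dense set via the Kolmogorov--Rogozin inequality rather than through $\int_B\log^-|Y_n|$, but that is a cosmetic difference.) The genuine gap is the one sentence where you dispose of the exceptional set: you claim that the orthonormality relations $\int_K|p_{n,i}|^2e^{-2nQ}\,d\tau=1$, ``transported from $K$ to $B$ via Bernstein--Markov,'' prevent $B_n(z)=\sum_i|p_{n,i}(z)|^2$ from being dominated by few indices off a small set. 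No argument is given, and the claimed mechanism is not the right one. What is actually needed is that for a.e.\ $z$ and every $\epsilon>0$ the count $J_n(z,\epsilon)=\#\{j:\tfrac1n\log|p_{n,j}(z)|\ge V_{K,Q}(z)-\epsilon\}$ tends to infinity; this is Theorem~\ref{02}, which the paper identifies as its main novel contribution. Orthonormality on $K$ does not yield it: for arclength on the unit circle the orthonormal polynomials are the monomials $z^j$, all perfectly normalized, yet $v(0)$ is concentrated on a single coordinate, and at exterior points the spreading ($\sim\epsilon n/\log|z|$ indices within $e^{-\epsilon n}$ of the max) comes from the arithmetic of leading coefficients, not from any $L^2$ identity on $K$. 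Note also that $B$ may lie far from $K$, where Bernstein--Markov controls only sup norms, not the relative sizes of individual $|p_{n,j}(z)|$.

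The paper's proof of this spreading statement requires two separate devices, neither of which appears in your sketch. Where $V_{K,Q}(z)=\inf Q$, it uses a multivariate Cartan estimate to show that $\tfrac1n\log|p_{n,\alpha_n}(z)|\to\inf Q$ a.e.\ for every sequence with $|\alpha_n|=o(n/\log n)$, so that all low-degree members of the array contribute. Where $V_{K,Q}(z)>\inf Q$, it bootstraps: given one index $\alpha$ with $|p_{n,\alpha}(z)|$ near the maximum, multiplying $p_{n,\alpha^-}$ by a coordinate function and expanding in the orthonormal basis produces a strictly earlier index $\beta\prec\alpha$ with $|p_{n,\beta}(z)|$ almost as large, \emph{provided} one controls the ratio $|a_{n,\alpha^-}|/|a_{n,\alpha}|$ of leading coefficients up to $e^{o(n)}$ (Proposition~\ref{03}). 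That control rests on the existence and continuity of directional Chebyshev constants up to the boundary of the simplex for the circular set $Z(D)\subset\CC^{d+1}$ --- a recent result of \cite{BL} that is a nontrivial external input even in the univariate case (hence the appendix). Your proposal correctly isolates the technical heart of the problem but does not supply the idea that resolves it. A secondary, fixable point: bounding $\int_B\mathbb P(|Y_n(z)|<e^{-\epsilon n})\,dA$ controls only the measure of the bad set, not $\int\log^-|Y_n|$ over it; you additionally need local uniform integrability of $\tfrac1n\log|G_n|$, which itself requires a high-probability lower bound on $\sup_B|G_n|$ and therefore again leans on anti-concentration at a point.
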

	\begin{remark}
	Theorem \ref{T:mainth-dim1} is the strongest possible result. Indeed the method introduced in \cite{DD} for showing necessity of the condition \eqref{E:log12} in the unweighted case $Q \equiv 0$ also works in the weighted case with a few minor modifications. For brevity we do not pursue this direction here. 
	\end{remark}
		
	\subsection{Dimension $d \ge 2$.} \qquad In dimensions $d \ge 2$, we view the zero set of $G_n$ as a $(1, 1)$-current (see \cite{klimek}):
	$$
	Z_{G_n} := dd^c \left( \frac{1}{\deg(G_n)} \log |G_n| \right)
	$$
	where $dd^c = \tfrac{i}{\pi} \partial \bar \partial$.  When paired with a test form $\phi$ (i.e., \ a smooth $(n-1,n-1)$-form with compact support), $\langle Z_{G_n}, \phi \rangle \in \CC$. We endow the space of $(1, 1)$-currents with the weak* topology and call $Z_{G_n}$ the \textbf{zero current} of $G_n$. In dimension $1$, $dd^c = \frac{1}{2\pi} \Delta$, where $\Delta$ is the Laplace operator, and this definition gives back the zero measure. 
	The operator $dd^c$ is continuous on $L^1_{\operatorname{loc}}(\mathbb C)$, so it is natural to study convergence of the functions $\frac{1}{\deg(G_n)} \log |G_n|$ there as $L^1_{\operatorname{loc}}(\mathbb C^d)$ is a more concrete space than the space of all $(1, 1)$-currents. In particular, it is metrizable, which allows us to consider convergence in probability on this space and then pass results down to the (non-metrizable) space of $(1, 1)$-currents.
	
	As in the one-variable case, we consider orthonormal polynomial bases of the form (1) or (2) above (with $\{p_{n, j} : j = 1, \dots, m_n\}$), but we require that the compact set $K$ be locally regular, rather than regular. In dimension $1$, these notions coincide. The correct multivariate analogue of condition \eqref{E:log12} is
	\begin{equation}
		\label{E:log13-multi}
		\mathbb P(\log(1 + |\xi_\alpha^{(n)}|) > t) = o(t^{-d}).
	\end{equation}
	Indeed, \cite{BD} showed that for polynomial bases of the form (1) above with $Q \equiv 0$, if $G_n$ is built from i.i.d.\ random variables satisfying \eqref{E:log13-multi}, then the functions $\tfrac{1}{\deg(G_n)} \log |G_n|$ converge in probability. As in the one-variable setting, \cite{BBL} extended this to all ensembles satisfying (1) or (2) with the additional assumption that the distribution of the $\xi_\alpha^{(n)}$ has no atoms, and we remove this condition in the present work.
	
	\begin{theorem} \label{mainthd} Let $\{\xi^{(n)}_{\alpha}\}$ be an array of i.i.d. random variables satisfying \eqref{E:log13-multi}; let $K\subset \CC^d$ be compact and locally regular; and let $Q$ be a continuous, real-valued function on $K$. Let $\{p_{n,\alpha}\}_{|\alpha|\leq n}$ be a basis for $\mathcal P_n$ as in (1) or (2). For random polynomials $G_n$ as in (\ref{probhb2}), 
		$$\frac{1}{n}\log |G_n|\to V_{K,Q} \ \hbox{in} \ L^1_{loc}(\CC^d)$$
		in probability.
	\end{theorem}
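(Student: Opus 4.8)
The plan is to establish the two matching one-sided bounds and glue them using compactness of plurisubharmonic families. Write $K_n(z,z)=\sum_{|\alpha|\le n}|p_{n,\alpha}(z)|^2$ for the weighted Bergman function of the basis; the deterministic inputs, available under the stated hypotheses from \cite{BBL} and its references, are the Bergman asymptotics $\tfrac1{2n}\log K_n(z,z)\to V_{K,Q}(z)$ locally uniformly on $\CC^d$, the weighted Bernstein--Walsh inequality $|p(z)|\le\|p\|_{K,Q}\,e^{nV_{K,Q}(z)}$ for $p\in\mathcal P_n$, and the weighted Bernstein--Markov bound $\|p_{n,\alpha}\|_{K,Q}=e^{o(n)}$ for orthonormal $p_{n,\alpha}$. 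For the upper bound, \eqref{E:log13-multi} gives $\mathbb P(\max_{|\alpha|\le n}|\xi^{(n)}_\alpha|>e^{\epsilon n})\le m_n\,\mathbb P(\log(1+|\xi|)>\epsilon n)=o(1)$, so with probability $\to1$ we have $\tfrac1n\log\max_\alpha|\xi^{(n)}_\alpha|<\epsilon$; on that event Bernstein--Walsh gives $|G_n(z)|\le m_n e^{o(n)}e^{nV_{K,Q}(z)}$, hence $u_n:=\tfrac1n\log|G_n|\le V_{K,Q}+o(1)$ uniformly on compacts. Since $\xi$ is non-degenerate, $\sum_\alpha|\xi^{(n)}_\alpha|^2$ is bounded below with probability $\to1$, so (Parseval and Bernstein--Markov) $\|G_n\|_{K,\infty}\ge e^{-Cn}$, and a Remez/Cartan small-value estimate for polynomials of degree $\le n$ on a fixed ball $B_0\supseteq K$ gives $\tfrac1n\int_{B_0}\log^-|G_n|\le C$. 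Thus, with probability $\to1$, the $u_n$ are locally uniformly bounded above and bounded in $L^1(B_0)$, hence form an $L^1_{loc}$-precompact family whose plurisubharmonic subsequential limits $u$ satisfy $u\le V_{K,Q}$.

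The step where the atom-free hypothesis of \cite{BBL} is removed is an anticoncentration estimate. Writing $S_n(z)=G_n(z)/\sqrt{K_n(z,z)}=\sum_\alpha\xi^{(n)}_\alpha v^{(n)}_\alpha(z)$ with $\sum_\alpha|v^{(n)}_\alpha(z)|^2=1$, I claim $\mathbb P(|S_n(z)|\le e^{-cn})\le C\,N_n(z)^{-1/4}$ for $N_n(z):=\#\{\alpha:|v^{(n)}_\alpha(z)|\ge e^{-cn/2}\}$, \emph{using only non-degeneracy of $\xi$}: on the high-probability event above truncate the tails, condition on the coefficients outside $A:=\{\alpha:|v^{(n)}_\alpha(z)|\ge e^{-cn/2}\}$, symmetrize, and apply the Kolmogorov--Rogozin inequality to $\sum_{\alpha\in A}\tilde\xi^{(n)}_\alpha v^{(n)}_\alpha(z)$; since $e^{-cn}/e^{-cn/2}\to0$ and the symmetrization $\tilde\xi$ of $\xi$ is non-trivial, each summand contributes $\gtrsim e^{-2cn}$ to the truncated variance, giving the bound $\lesssim|A|^{-1/4}$. (Atom-freeness is thereby replaced by spreading the mass over $N_n(z)$ basis elements.) One then shows $N_n(z)\to\infty$ for Lebesgue-a.e.\ $z$; this is the technical heart. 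The mechanism is that the admissible ordering forces $|p_{n,\alpha}(z)|$ to vary geometrically as $\alpha$ moves by a coordinate vector $e_j$ (to leading order $p_{n,\alpha+e_j}$ carries the monomial $z_jz^\alpha$, and Bernstein--Markov controls the $\prec$-lower order corrections up to $e^{o(n)}$), so that for a.e.\ $z$ there is an $\asymp n$-long window of indices $\alpha$ with $|v^{(n)}_\alpha(z)|$ within $e^{-cn/2}$ of the maximum. Consequently $\int_{B_0}\mathbb P(|S_n(z)|\le e^{-cn})\,dz\to0$ by dominated convergence.

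It remains to show $\tfrac1n\int_{B_0}\log^-|S_n(z)|\,dz\to0$ in probability, for then $\tfrac1n\int_{B_0}\log|G_n|=\tfrac1n\int_{B_0}\log|S_n|+\tfrac1{2n}\int_{B_0}\log K_n(z,z)\,dz\to\int_{B_0}V_{K,Q}$ (the $\log^+|S_n|$ part is $o(1)$ uniformly, the Bergman term by dominated convergence), which with $u\le V_{K,Q}$ forces every subsequential limit to satisfy $\int_{B_0}u=\int_{B_0}V_{K,Q}$ and hence $u=V_{K,Q}$ on $B_0$. Fix $\epsilon>0$, $\delta=\epsilon/(2|B_0|)$, and write $\int_{B_0}\log^-|S_n|=\int_0^\infty\mathrm{Leb}\{z\in B_0:|S_n(z)|<e^{-t}\}\,dt$. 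The part $t<\delta n$ contributes $\le\delta n|B_0|$, so $\tfrac1n$ of it is $\le\epsilon/2$. For $t\ge\delta n$ use Markov's inequality and bound $\mathbb E\,\mathrm{Leb}\{z\in B_0:|S_n(z)|<e^{-t}\}$ by the minimum of (a) the probabilistic bound $\int_{B_0}\mathbb P(|S_n(z)|\le e^{-\delta n})\,dz=o(1)$ from the previous paragraph (valid for all $t\ge\delta n$), and (b) a deterministic Remez bound $\le C|B_0|e^{-c_dt/n}$ valid for $t\gtrsim n$ on the high-probability event of the first paragraph (off that event, bound $\mathrm{Leb}$ by $|B_0|$ times its $o(1)$ probability, which is $t$-summable against the Remez tail). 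Integrating the minimum over $t\in[\delta n,\infty)$ gives $o(n)$, so $\mathbb P(\tfrac1n\int_{B_0}\log^-|S_n|>\epsilon)\to0$.

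To assemble: given any subsequence of $\mathbb N$, a Borel--Cantelli passage produces a further subsequence $\mathcal N'$ along which, with probability $1$, the upper bound and $L^1(B_0)$-bound hold eventually and $\tfrac1n\int_{B_0}\log^-|S_n|\to0$; any $L^1_{loc}$-cluster point $u$ of $\{u_n\}_{n\in\mathcal N'}$ is then plurisubharmonic, $\le V_{K,Q}$, and satisfies $\int_{B_0}u=\int_{B_0}V_{K,Q}$, hence $u=V_{K,Q}$ on $B_0$; since every sub-subsequence has a further one converging to $V_{K,Q}$, we get $u_n\to V_{K,Q}$ in $L^1(B_0)$ along $\mathcal N'$ almost surely. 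As $B_0\supseteq K$ was an arbitrary large ball, this yields convergence in probability in $L^1_{loc}(\CC^d)$. I expect the genuinely hard step to be showing $N_n(z)\to\infty$ for a.e.\ $z$: ruling out concentration of the weighted Bergman function on boundedly many basis polynomials off a Lebesgue-null set requires a careful quantitative use of the admissible ordering together with the Bernstein--Markov estimates to control the non-leading terms of the $p_{n,\alpha}$. A secondary obstacle is the bookkeeping forced by \eqref{E:log13-multi} being too weak for the upper bound to hold almost surely, so that everything must be routed through Markov's inequality and subsequence extractions rather than a single almost-sure event.
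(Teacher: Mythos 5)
Your architecture matches the paper's: a Bernstein--Walsh upper bound from \eqref{E:log13-multi}, a Kolmogorov--Rogozin anticoncentration bound for the pointwise lower bound (this is exactly how the paper replaces the atom-free hypothesis of \cite{BBL}), and a reduction of everything to the statement that the number $N_n(z)$ of basis polynomials that are near-extremal at $z$ tends to infinity off a Lebesgue-null set --- which is the paper's Theorem \ref{02}. The problem is that you leave precisely this step, which you correctly call the technical heart, as a one-sentence mechanism that does not work as stated. Expanding $z_j p_{n,\alpha-e_j}=\sum_{\beta\preceq\alpha}\lambda_\beta p_{n,\beta}$, Parseval gives $|\lambda_\beta|\le \max_K|z_j|$, so from $|p_{n,\alpha-e_j}(z)|$ large you can only conclude that \emph{some} $|p_{n,\beta}(z)|$ with $\beta\preceq\alpha$ is large --- possibly $\beta=\alpha-e_j$ itself, which gains nothing. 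To extract a \emph{new} near-extremal index one must isolate the top coefficient $\lambda_\alpha=a_{n,\alpha-e_j}/a_{n,\alpha}$ and show it is at least $e^{-o(n)}$ uniformly. This ratio of leading coefficients of consecutive orthonormal polynomials is \emph{not} controlled by the Bernstein--Markov property (which only gives $\|p_{n,\alpha}\|_{K,Q}=e^{o(n)}$); it is equivalent to the statement that the directional Chebyshev constants of a lifted circled set in $\CC^{d+1}$ exist and are continuous up to the \emph{boundary} of the simplex $\Sigma_{d+1}$, which is the recent result of \cite{BL} encapsulated in Proposition \ref{03} and is the decisive external input of the whole proof. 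Your sketch never identifies this, and without it the induction producing $N_n(z)\to\infty$ collapses.

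Two further points. First, the exceptional null set does not come from where you place it: the regime $V_{K,Q}(z)=\inf Q$ (low-degree polynomials dominate) needs a separate argument via the multivariate Cartan estimate (Lemma \ref{cartan2}) showing that $\tfrac1n\log|p_{n,\alpha_n}(z)|\to\inf Q$ a.e.\ when $|\alpha_n|=o(n/\log n)$; this case is also needed inside the inductive step to guarantee $\alpha_k\neq 0$ so that one can subtract a coordinate vector. Second, your claim of an ``$\asymp n$-long window'' of near-extremal indices is both stronger than what the argument yields (one only gets $\liminf_n N_n(z)\ge k$ for every fixed $k$) and stronger than needed for Kolmogorov--Rogozin. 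The remaining assembly (integrating the anticoncentration bound over a ball, Remez tail control of $\log^-$, subsequence extraction) is a legitimate alternative to the paper's route through Proposition \ref{keypropB}, but it does not compensate for the missing core.
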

	We will give the definitions of all notions in Theorem \ref{mainthd} in the next section. Note that despite the similarity to the main result of \cite{BBL}, the potential for distributions with atoms requires a completely different method of proof.
	
	The next corollary translates Theorem \ref{mainthd} to a statement about convergence of zero measures. Note that both of the types of convergence we state below are equivalent to convergence in probability in metrizable spaces.
	\begin{corollary}
	\label{C:zero-currents}
	In the setting of Theorem \ref{mainthd}, the zero currents $Z_{G_n}$ converge to $dd^c V_{K, Q}$ in either of the following senses:
	\begin{itemize}
		\item For any sequence $Y \subset \mathbb N$ there is a further subsequence $Y' \subset Y$ such that $Z_{G_n} \to dd^c V_{K, Q}$ weak*-almost surely along $Y'$.
		\item For any weak*-open set $O$ containing $dd^c V_{K, Q}$,
		$$
		\lim_{n \to \infty} \mathbb P(Z_{G_n} \in O) = 1.
		$$
	\end{itemize}
	\end{corollary}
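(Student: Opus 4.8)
The plan is to obtain Corollary~\ref{C:zero-currents} from Theorem~\ref{mainthd} by soft functional analysis: the only genuine point is to handle the unlikely event that $G_n$ drops degree, since $Z_{G_n}$ is normalized by $1/\deg(G_n)$ while Theorem~\ref{mainthd} is phrased with $1/n$. Fix a metric $\rho$ inducing the topology of $L^1_{\mathrm{loc}}(\CC^d)$, and set $X_n:=\tfrac1n\log|G_n|$, so that Theorem~\ref{mainthd} reads $\mathbb P(\rho(X_n,V_{K,Q})>\epsilon)\to 0$ for every $\epsilon>0$.

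First I would dispose of the degree issue. Because $\{p_{n,\alpha}\}_{|\alpha|\le n}$ is a basis of $\mathcal P_n$ compatible with the degree filtration, $\deg(G_n)=n$ unless $\xi^{(n)}_\alpha=0$ for \emph{every} $\alpha$ with $|\alpha|=n$; since the $\xi^{(n)}_\alpha$ are i.i.d.\ and non-degenerate, $p:=\mathbb P(\xi^{(n)}_\alpha=0)<1$, hence $\mathbb P(\deg(G_n)<n)\le p^{\,\#\{\alpha:|\alpha|=n\}}\to 0$. On the event $E_n:=\{\deg(G_n)=n\}$ we have the exact identity $Z_{G_n}=dd^cX_n$.

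Next I would use that $dd^c$ is continuous from $L^1_{\mathrm{loc}}(\CC^d)$ into the space of $(1,1)$-currents with the weak$^*$ topology: for a fixed test form $\phi$, the pairing $\langle dd^c f,\phi\rangle$ is, after integration by parts, the integral of $f$ against a fixed smooth compactly supported form, hence continuous in $f\in L^1_{\mathrm{loc}}(\CC^d)$; thus preimages under $dd^c$ of weak$^*$-open sets are open in the metrizable space $L^1_{\mathrm{loc}}(\CC^d)$. With this, the two bullets follow in the standard way. For the second: given a weak$^*$-open $O\ni dd^cV_{K,Q}$, its preimage $(dd^c)^{-1}(O)$ is an open neighborhood of $V_{K,Q}$, so it contains a $\rho$-ball $B$ about $V_{K,Q}$; on $E_n\cap\{X_n\in B\}$ one has $Z_{G_n}=dd^cX_n\in O$, whence $\mathbb P(Z_{G_n}\in O)\ge \mathbb P(X_n\in B)-\mathbb P(\deg(G_n)<n)\to 1$. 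For the first: given $Y\subset\NN$, extract $Y'\subset Y$ along which $\mathbb P(\rho(X_n,V_{K,Q})>1/k)$ (for each $k$) and $\mathbb P(\deg(G_n)<n)$ are summable — possible since all these tend to $0$; by Borel--Cantelli, almost surely along $Y'$ one has $X_n\to V_{K,Q}$ in $L^1_{\mathrm{loc}}$ and $\deg(G_n)=n$ eventually, hence $Z_{G_n}=dd^cX_n\to dd^cV_{K,Q}$ weak$^*$ along $Y'$.

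I do not expect any serious obstacle here: once the degree event is shown to have probability tending to $1$, the argument is the routine transfer of convergence in probability through a continuous map together with the usual extraction of an almost surely convergent subsequence. The only thing requiring care is the bookkeeping that reconciles the $1/\deg(G_n)$ normalization in the definition of $Z_{G_n}$ with the $1/n$ normalization in Theorem~\ref{mainthd}.
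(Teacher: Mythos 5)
Your argument follows the same route as the paper: continuity of $dd^c$ from $L^1_{\mathrm{loc}}(\CC^d)$ to $(1,1)$-currents, pulling back weak$^*$-open sets to open neighbourhoods of $V_{K,Q}$ for the second bullet, and extracting almost surely convergent subsequences from convergence in probability in the metrizable space $L^1_{\mathrm{loc}}$ for the first. You are in fact more careful than the paper on one point the paper passes over silently, namely reconciling the $1/\deg(G_n)$ normalization in $Z_{G_n}$ with the $1/n$ normalization in Theorem \ref{mainthd}.

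However, your treatment of that degree issue has a gap in dimension one. Your bound $\mathbb P(\deg(G_n)<n)\le p^{\,\#\{\alpha:|\alpha|=n\}}$ with $p=\mathbb P(\xi^{(n)}_\alpha=0)$ tends to $0$ only when $\#\{\alpha:|\alpha|=n\}\to\infty$, i.e.\ for $d\ge 2$. For $d=1$ there is exactly one top-degree index, so the bound is the constant $p$, which need not vanish; and since the whole point of the paper is to allow distributions with atoms (in particular an atom at $0$), you cannot dismiss this case, nor can you find a subsequence along which $\{\deg(G_n)<n\}$ is summable (these events are independent across $n$, so they occur infinitely often a.s.). The repair is standard: $\deg(G_n)<n-k$ forces $k+1$ independent coefficients to vanish (by the triangular structure of the basis, the degree-$j$ homogeneous parts of $\{p_{n,\alpha}\}_{|\alpha|=j}$ are linearly independent), so $\mathbb P(\deg(G_n)<n-k)\le p^{k}$ uniformly in $n$. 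Hence $n/\deg(G_n)\to 1$ in probability, and along suitable subsequences almost surely; writing $\tfrac{1}{\deg(G_n)}\log|G_n|=\tfrac{n}{\deg(G_n)}\,X_n$ then gives convergence to $V_{K,Q}$ in $L^1_{\mathrm{loc}}$ exactly as you intend. With that substitution your proof is complete and, modulo this bookkeeping, coincides with the paper's.
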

		
		In the next section, we provide the necessary (weighted) potential- and pluripotential-theoretic background, including a key result on the leading coefficients $a_{n,\alpha}$ of $p_{n,\alpha}$ (Proposition \ref{03}). Section 3 outlines the general strategy of the proof of Theorem \ref{mainthd}, reducing it to verifying the crucial Proposition \ref{01}. The proof of this result is completely different to its analogue in \cite{BBL}; the key step is Theorem \ref{02}. Theorem \ref{T:mainth-dim1} and Corollary \ref{C:zero-currents} are also deduced from Theorem \ref{mainthd} in this section. The details of the actual proof of Theorem \ref{02} are given in section 4.

		\section{Weighted (pluri-)potential theory preliminaries}
		\label{S:prelim}
		We begin with some general definitions, valid in $\CC^d$ for any $d\geq 1$. For $K \subset \CC^d$ compact, let $V_K^*(z):=\limsup_{\zeta \to z}V_K(\zeta)$ where 
		\begin{align*}
V_K(z):&=\sup\{u(z):u\in L(\CC^d) \ \hbox{and} \  u\leq 0 \ \hbox{on} \ K\} \\
&= \sup \{{1\over {\rm deg} (p)}\log |p(z)|:p\in \bigcup {\mathcal P}_n, \  \|p\|_{K}:=\sup_{\zeta \in K}|p(\zeta)| \leq 1 \}.
		\end{align*}
		Here $L(\CC^d)$ is the set of all plurisubharmonic functions on $\CC^d$ of logarithmic growth: $u\in L(\CC^d)$ if $u$ is plurisubharmonic in $\CC^d$ and $u(z)\leq \log |z| +O(1)$ as $|z|\to \infty$. Note that $V_K=V_{\hat K}$ where
		\begin{equation} \label{phull}  \hat K:=\{z\in \CC^d: |p(z)|\leq ||p||_K \ \hbox{for all polynomials} \ p\}. \end{equation}
		 It is known that either $V_K^*\equiv +\infty$ ($K$ is pluripolar) or $V_K^*\in L(\CC^d)$; in this paper, we always assume the latter holds. The compact set $K\subset \CC^d$ is {\it regular} at $a\in K$ if $V_K$ is continuous at $a$; and $K$ is regular if it is continuous on $K$, i.e.,\ if $V_K=V_K^*$. We call a compact set $K\subset \CC^d$ {\it locally regular} if for any $z_0 \in K$, $V_{K\cap \bar{B(z_0,r)}}$ is continuous at $z_0$ for $r>0$ where $B(z_0,r):=\{z:|z-z_0|< r\}$; this is an a priori stronger notion than regularity but these notions coincide if $d=1$. Let $Q$ be a continuous, real-valued function on $K$ and let $w:=e^{-Q}$. The weighted extremal function for $K,Q$ is defined as
		\begin{align*}
	V_{K,Q}(z):&=\sup\{u(z): u\in L(\CC^d), \ u\leq Q \ \hbox{on} \ K\} \\
	&= \sup \{\frac{1}{\deg(p)}\log |p(z)|: p\in \cup_n \mathcal P_n, \ \|pe^{-\deg(p)\cdot Q}\|_K\leq 1\}.
		\end{align*}
		It is known that for $K$ locally regular and $Q$ continuous, $V_{K,Q}$ is continuous on $\CC^d$ (cf., \cite{NQD}).

		To explain the allowable bases $\{p_{n,j}\}_{j=1,...,m_n}$ in Theorems \ref{T:mainth-dim1} and \ref{mainthd}, given a positive measure $\tau$ on $K$, we say that $(K,Q,\tau)$ satisfies a weighted Bernstein-Markov property if 
		\begin{equation} \label{wbwin} \|w^np_n\|_K\leq M_n \|w^np_n\|_{L^2(\tau)} \ \hbox{for all} \ p_n \in \mathcal P_n, \ n=1,2,...\end{equation}
		where $\lim_{n\to \infty} M_n^{1/n}=1$. For such triples, letting $\{p_{n,j}\}_{j=1,...,m_n}$ be an orthonormal basis in $L^2(e^{-2nQ}\tau)$ for $\mathcal P_n$, define
		\begin{equation} \label{bndef} B_n(z):=\sum_{j=1}^{m_n}|p_{n,j}(z)|^2.\end{equation}
		It is well-known that 
		$$\lim_{n\to \infty} \frac{1}{2n}\log B_n(z)=V_{K,Q} \ \hbox{locally uniformly on} \ \CC^d$$
		and this is the key deterministic property of our bases $\{p_{n,j}\}$ that we will need. More generally, let $\{\mu_n\}$ be a sequence of probability measures on $K$ with the following property: if $M_n$ is the smallest constant such that 
		$$\|qe^{-nQ}\|_K\leq M_n \|q\|_{L^2(e^{-2nQ}\mu_n)}=M_n\|qe^{-nQ}\|_{L^2(\mu_n)}   \ \hbox{for all} \ q \in \mathcal P_n,$$
		then $\lim_{n\to \infty} M_n^{1/n}=1$. We say that $\{\mu_n\}$ are {\it asymptotically weighted Bernstein-Markov} for $K,Q$ (simply asymptotically Bernstein-Markov if $Q\equiv 0$). In \cite[Proposition 2.8]{BBL} the following was proved (it was stated slightly differently there):
		
		\begin{proposition}\label{locunif} Let $K\subset \CC^d$ be a locally regular compact set and $Q$ a continuous, real-valued function on $K$. Let $\{\mu_n\}$ be an asymptotically weighted Bernstein-Markov sequence for $K,Q$ and let $\{p_{n,j}\}_{j=1,...,m_n}$ be an orthonormal basis  for $\mathcal P_n$ in $L^2(e^{-2nQ}\mu_n)$. Then for $B_n$ as in (\ref{bndef}),
			$$\lim_{n\to \infty} \frac{1}{2n}\log B_n=V_{K,Q} \ \hbox{locally uniformly on} \ \CC^d.$$
		\end{proposition}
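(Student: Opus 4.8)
The plan is to sandwich the Bergman function $B_n$ between two powers of the weighted Siciak extremal function
$$\Phi_n(z):=\sup\bigl\{|p(z)|:p\in\mathcal P_n,\ \|pe^{-nQ}\|_K\le 1\bigr\},$$
and then invoke the weighted Bernstein--Walsh--Siciak theorem. First I would record the standard extremal characterization of $B_n$: since the $p_{n,j}$ are orthonormal in $L^2(e^{-2nQ}\mu_n)$, writing $q=\sum_j c_jp_{n,j}\in\mathcal P_n$ and applying Cauchy--Schwarz gives $|q(z)|^2\le\|q\|_{L^2(e^{-2nQ}\mu_n)}^2\,B_n(z)$, with equality for $q=\sum_j\overline{p_{n,j}(z)}\,p_{n,j}$; hence
$$B_n(z)=\sup\Bigl\{\tfrac{|q(z)|^2}{\|q\|_{L^2(e^{-2nQ}\mu_n)}^2}:q\in\mathcal P_n\setminus\{0\}\Bigr\}.$$
(Testing $\Phi_n$ against constants shows $\Phi_n>0$ everywhere, so $B_n>0$ and $\log B_n$ is finite throughout $\CC^d$.)

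Next I would establish the two sandwiching inequalities. For the lower bound, fix $z_0$; given $\varepsilon>0$ choose $p\in\mathcal P_n$ with $\|pe^{-nQ}\|_K\le 1$ and $|p(z_0)|\ge(1-\varepsilon)\Phi_n(z_0)$. Since $\mu_n$ is a probability measure supported on $K$,
$$\|pe^{-nQ}\|_{L^2(\mu_n)}^2=\int_K|p|^2e^{-2nQ}\,d\mu_n\le\|pe^{-nQ}\|_K^2\le 1,$$
so the extremal characterization gives $B_n(z_0)\ge(1-\varepsilon)^2\Phi_n(z_0)^2$; letting $\varepsilon\to0$ yields $B_n\ge\Phi_n^2$ pointwise on $\CC^d$ (this step uses nothing about $\mu_n$ beyond being a probability measure on $K$). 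For the upper bound, each $p_{n,j}$ has $\|p_{n,j}e^{-nQ}\|_{L^2(\mu_n)}=1$, so the asymptotic weighted Bernstein--Markov property gives $\|p_{n,j}e^{-nQ}\|_K\le M_n$; thus $p_{n,j}/M_n$ is admissible for $\Phi_n$, so $|p_{n,j}(z)|\le M_n\Phi_n(z)$ and $B_n(z)\le m_nM_n^2\Phi_n(z)^2$. Taking logarithms,
$$\frac1n\log\Phi_n\ \le\ \frac1{2n}\log B_n\ \le\ \frac1n\log\Phi_n+\frac1{2n}\log(m_nM_n^2),$$
and since $m_n=\binom{d+n}{d}$ is polynomial in $n$ while $M_n^{1/n}\to1$, the right-hand error term tends to $0$ uniformly on $\CC^d$.

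It therefore remains to show $\tfrac1n\log\Phi_n\to V_{K,Q}$ locally uniformly. The bound $\tfrac1n\log\Phi_n\le V_{K,Q}$ follows directly from the $L(\CC^d)$-definition of $V_{K,Q}$, since for $p\in\mathcal P_n$ with $\|pe^{-nQ}\|_K\le1$ the function $\tfrac1n\log|p|$ lies in $L(\CC^d)$ and is $\le Q$ on $K$. The reverse inequality, $\liminf_n\tfrac1n\log\Phi_n\ge V_{K,Q}$ locally uniformly, is the weighted Bernstein--Walsh--Siciak theorem, and it holds precisely because $K$ is locally regular and $Q$ is continuous, which (cf.\ \cite{NQD}) makes $V_{K,Q}$ continuous on $\CC^d$. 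I expect this last point to be the only genuine obstacle, and the sole place where local regularity is needed: concretely one must produce, for each compact $L\subset\CC^d$, each $\varepsilon>0$, and all large $k$, polynomials $p\in\mathcal P_k$ with $\|pe^{-kQ}\|_K\le1$ and $\tfrac1k\log|p|\ge V_{K,Q}-\varepsilon$ on $L$ --- obtained by padding near-extremal polynomials via $p\mapsto p^{\lfloor k/\deg p\rfloor}$ and using the (uniform, on compacta) continuity of $V_{K,Q}$ to control the approximation. Feeding this back into the sandwich gives $\tfrac1{2n}\log B_n\to V_{K,Q}$ locally uniformly on $\CC^d$, completing the proof.
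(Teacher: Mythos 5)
The paper does not prove this proposition; it cites \cite[Proposition 2.8]{BBL}, so there is no in-text argument to compare against. Your sandwich $\Phi_n^2\le B_n\le m_nM_n^2\Phi_n^2$ (reproducing-kernel extremal characterization for the lower bound, Bernstein--Markov for the upper) is the standard route and is correct; the problem is correctly reduced to the locally uniform convergence $\tfrac1n\log\Phi_n\to V_{K,Q}$, which is the weighted Bernstein--Walsh--Siciak theorem for locally regular $K$ and continuous $Q$ and is fair to quote at the level of this paper (the paper itself already asserts the pointwise identity in its displayed definition of $V_{K,Q}$). One small imprecision in your sketch of that last step: you cannot in general produce a \emph{single} $p\in\mathcal P_k$ with $\tfrac1k\log|p|\ge V_{K,Q}-\varepsilon$ on all of a compact $L$ (zeros of $p$ obstruct this); the correct statement is $\tfrac1k\log\Phi_k\ge V_{K,Q}-\varepsilon$ on $L$, obtained e.g.\ from continuity of $\Phi_k$, monotonicity of $\tfrac1{k}\log\Phi_{k}$ along multiples, Dini's theorem, and then the padding argument to pass from multiples to all large $n$. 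With that rephrasing the argument is complete.
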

		
		Now let $K\subset \CC^d$ be compact and locally regular. For $Q$ continuous on $K$, we have $w=e^{-Q}>0$ on $K$.
		Fix $\tau$ a finite measure on $K$ such that $(K,Q,\tau)$ is weighted Bernstein-Markov. Let 
		$$\mathcal P(\alpha)=\mathcal P_d(\alpha):=\{z^{\alpha}+\sum_{\beta \prec \alpha} c_{\beta}z^{\beta}: c_{\beta}\in \CC\}.$$
		Recall that our ordering $\prec$ on multiindices satisfies $\beta \prec \alpha$ whenever $|\beta| < |\alpha|$. We write
		\begin{equation}
			\label{pnal}
p_{n,\alpha}(z)=a_{n,\alpha}z^{\alpha}+\sum_{\beta \prec \alpha} c_{n,\beta}z^{\beta}=:a_{n,\alpha}\tilde p_{n,\alpha}(z)
		\end{equation}
		where $\{p_{n,\alpha}\}_{|\alpha|\leq n}$ is an orthonormal basis in $L^2(e^{-2nQ}\tau)$ for $\mathcal P_n$. Then
		$$\tilde p_{n,\alpha}(z)=z^{\alpha}+...\in \mathcal P(\alpha)$$
		and $\|w^np_{n,\alpha}\|_{L^2(\tau)}=1$ implies $\|w^n\tilde p_{n,\alpha}\|_{L^2(\tau)}=\frac{1}{|a_{n,\alpha}|}$, which is minimal among polynomials $p\in \mathcal P(\alpha)$. 
		
		We need some control over these leading coefficients $a_{n,\alpha}$. For multiindices $\alpha \in \NN^d$ with $|\alpha|\leq n$, let
		\begin{equation}
			\label{Tna}
			T(n,\alpha):=\inf \{\|w^np\|_{L^2(\tau)}: p\in \mathcal P(\alpha)\}.
		\end{equation}
		Since $T(n,\alpha)= \frac{1}{|a_{n,\alpha}|}$ an equivalent issue is to control the numbers $T(n,\alpha)$.

		To proceed, following \cite{Bloom}, taking $K,w=e^{-Q}$ and $\tau$ as above, we let
		$$Z=Z(K,w):=\{(t,z)=(t,t\lambda)\in \CC \times \CC^d:\lambda\in K, \ |t|=w(\lambda)\}$$
		(note $\lambda = z/t\in \CC^d$) and
		$$Z(D):=\{(t,z)=(t,t\lambda):\lambda\in K, \ |t|\leq w(\lambda)\}.$$
		Then $Z(D)$ is a complete circular set ($\zeta Z(D) \subset Z(D)$ for $\zeta \in \CC$ with $|\zeta|\leq 1$). Since $V_{K,Q}$ is continuous, from Theorem 2.1 in \cite{Bloom}, $Z(D)$ is regular. Define a measure $\nu$ by the integration formula
		$$
		\int \phi d \nu = \int \left( \int \phi(t, t \lambda) d m_\lambda(t) \right) d \tau(\lambda)
		$$
		for measurable $\phi:\CC^{d + 1} \to \CC$. Here $m_{\lambda}$ is normalized arclength measure on the circle $|t|=w(\lambda)$ (see (3.3) of \cite{Bloom}). The probability measure $\nu$ satisfies the Bernstein-Markov property on $Z(D)$ in dimension $d + 1$ (see \cite[Theorem 3.1]{Bloom} and surrounding discussion). 
		
		 Given a polynomial $G_n(z)$ of degree at most $n$ in $\CC^d$, we can form the homogeneous polynomial
		$$P_n(t,z)=t^nG_n(z/t)$$
		of degree $n$ in $\CC^{d+1}$. Then 
		$$\|w^nG_n\|_K=\|P_n\|_{Z(D)} \quad \hbox{and} \quad \|w^nG_n\|_{L^2(\tau)}=\|P_n\|_{L^2(\nu)}.$$
		Next, applying Gram-Schmidt in $L^2(w^{2n}\tau)$ to the monomials $\{z^{\alpha}\}_{|\alpha|\leq n}$ in $\CC^d$ with the ordering $\prec$, we thus obtain orthonormal polynomials $\{p_{n,\alpha}\}_{|\alpha|\leq n}$ as in \eqref{pnal}. 
%
		Now, define
		$$P_{n,\alpha}(t,z):=t^np_{n,\alpha}(z/t).$$
		These are homogeneous polynomials of degree $n$ in $\CC^{d+1}$ which are orthonormal in $L^2(\nu)$.
		By extending the ordering $\prec$ to multi-indices in the variables $(t, z_1, \dots, z_d)$ so that $t$ comes before all the $z$ variables, i.e.,\
		$$
		(n-|\beta|,\beta)\prec (n-|\alpha|,\alpha) \ \hbox{if} \ \beta \prec \alpha,
		$$
		we have a similar expression to \eqref{pnal} for $P_{n, \alpha}$:
		$$P_{n,\alpha}(t,z)=a_{n,\alpha}t^{n-|\alpha|}z^{\alpha}+\sum_{\beta \prec \alpha} c_{\beta} t^{n-|\beta|}z^{\beta}.$$  
		Since
		$\|w^np_{n,\alpha}\|_{L^2(\tau)}=\|P_{n,\alpha}\|_{L^2(\nu)},$
		we similarly have
		$$T(n,\alpha)=\inf \{\|P\|_{L^2(\nu)}: P\in \mathcal P(n,\alpha)\}= \frac{1}{|a_{n,\alpha}|}$$
		where
		$$ \mathcal P(n,\alpha)=\mathcal P_{d+1}(n,\alpha):=\Big\{ t^{n-|\alpha|}z^{\alpha}+\sum_{\beta \prec \alpha} a_{\beta} t^{n-|\beta|}z^{\beta}: a_{\beta}\in \CC\Big\}$$
		are the ``monic'' homogeneous polynomials in $\CC^{d+1}$ corresponding to the class $\mathcal P(\alpha)$ in $\CC^d$. Thus, in order to control the leading coefficients $a_{n,\alpha}$ of $p_{n,\alpha}$ it suffices to work with the measure $\nu$ and the (same) leading coefficients $a_{n,\alpha}$ of $P_{n,\alpha}$. 
		
		Let $\Sigma_d:=\{(x_1,...,x_d)\in \RR^d: 0\leq x_i \leq 1, \ \sum_{j=1}^d x_i = 1\}$ be the standard simplex in $\RR^d$ and let 
		$$
		\Sigma_d^0:=\{(x_1,...,x_d)\in \RR^d: 0 < x_i <1, \ \sum_{j=1}^d x_i = 1\}.
		$$ Given a compact set $F\subset \CC^d$ and $\theta \in \Sigma_d$, Zaharjuta \cite{Z} defined directional Chebyshev constants $\tau(F,\theta)$ via 
		$$\tau(F,\theta):=\limsup_{\frac{\alpha}{|\alpha|}\to \theta}\tau_{\alpha}(F)$$
		where
		$$\tau_{\alpha}(F):=\inf \{\|p\|_F:p\in \mathcal P_d(\alpha)\}^{1/|\alpha|}.$$
		He showed that for $\theta\in \Sigma_d^0$, the limit
		$$\tau(F,\theta)= \lim_{\frac{\alpha}{|\alpha|}\to \theta}\tau_{\alpha}(F)$$
		exists and $\theta \to \tau(F,\theta)$ is continuous on $\Sigma_d^0$. For $\theta \in \Sigma \setminus \Sigma^0$, the limit need not exist (see Remark 1 in \cite{Z}). However, it was recently shown in \cite{BL} that for $F$ regular, the limit does, indeed, exist for $\theta \in \Sigma \setminus \Sigma^0$, and, moreover, in this case, $\theta \to \tau(F,\theta)$ is continuous on {\it all} of $\Sigma_d$. This fact will be crucial. 
		
		We write $\alpha(i)\in \NN^d$ for the $i$-th multi-index in our ordering $\prec$ on $\NN^d$. By \cite{BL} and the Bernstein-Markov property of $\nu$ on $Z(D)$, we know that for all $\tilde \theta=(\theta_0, \theta) \in \Sigma_{d+1}$, where $\theta_0\in [0,1]$, the limits in the directional Chebyshev constants for $Z(D)\subset \CC^{d+1}$ exist and are given by 
		\begin{equation}\label{lim} \tau(Z(D),\tilde \theta)=\lim_{(n-|\alpha|,\alpha)/n \to \tilde \theta } T(n, \alpha)^{1/n}=\lim_{(n-|\alpha|,\alpha)/n \to \tilde \theta }  \frac{1}{|a_{n,\alpha}|^{1/n}}.\end{equation}
		Here given $\alpha \in \NN^d$ so that $\alpha/|\alpha|=\theta \in \Sigma_d$, we have $(n-|\alpha|,\alpha)\in \NN^{d+1}$ and $(n-|\alpha|,\alpha)/n =\tilde \theta \in \Sigma_{d+1}$; and for $Z(D)$ circled ($(t,z)\in Z(D)$ implies $e^{i\theta}(t,z)\in Z(D)$) we only need consider these homogeneous Chebyshev constants $T(n,\alpha)$ to construct the directional Chebyshev constants for $Z(D)$ (cf., \cite{J}).
		
	We will need the following simple consequence of \eqref{lim}.

		\begin{proposition}\label{03} For any sequences $\{k_n\}, \{i_n\}\subset \NN$ with $k_n,i_n\leq m_n$ and 
			$$\|\alpha(i_n)-\alpha(k_n)\|_2=o(n),$$
			we have
			$$\lim_{n\to \infty} \frac{1}{n} \log \frac{|a_{n,\alpha(i_n)}|}{|a_{n,\alpha(k_n)}|}=0.$$
		\end{proposition}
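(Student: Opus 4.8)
The plan is to obtain this as a soft consequence of the limit formula \eqref{lim}. Two facts about the directional Chebyshev constant $\tilde\theta\mapsto\tau(Z(D),\tilde\theta)$ are needed: that it is continuous on the \emph{closed} simplex $\Sigma_{d+1}$ --- which is exactly what \cite{BL} provides, since $Z(D)$ is regular --- and that it is bounded above and below by positive constants there. The upper bound is immediate, since $T(n,\alpha)\le\|t^{n-|\alpha|}z^\alpha\|_{L^2(\nu)}\le R^n$ because $Z(D)$ is bounded; the positive lower bound I address at the end. Together these make $\log\tau(Z(D),\cdot)$ uniformly continuous on the compact set $\Sigma_{d+1}$.

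The only point requiring an argument is to upgrade \eqref{lim}, which a priori controls $T(n,\alpha)^{1/n}$ only along sequences whose normalizations $(n-|\alpha|,\alpha)/n$ converge to a \emph{fixed} $\tilde\theta$, to the uniform statement: for every sequence $\alpha_n\in\NN^d$ with $|\alpha_n|\le n$, writing $\tilde\theta_n:=(n-|\alpha_n|,\alpha_n)/n\in\Sigma_{d+1}$, one has $T(n,\alpha_n)^{1/n}-\tau(Z(D),\tilde\theta_n)\to0$. This follows by a compactness/contradiction argument: if it failed there would be $\epsilon>0$ and a subsequence along which $|T(n_j,\alpha_{n_j})^{1/n_j}-\tau(Z(D),\tilde\theta_{n_j})|\ge\epsilon$; since $\Sigma_{d+1}$ is compact, pass to a further subsequence with $\tilde\theta_{n_j}\to\tilde\theta_*$, so that $\tau(Z(D),\tilde\theta_{n_j})\to\tau(Z(D),\tilde\theta_*)$ by continuity while \eqref{lim} applied along this subsequence gives $T(n_j,\alpha_{n_j})^{1/n_j}\to\tau(Z(D),\tilde\theta_*)$ --- contradicting the separation. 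Granting this uniform statement, the proposition becomes a triangle inequality: applying it to $\alpha_n=\alpha(i_n)$ and to $\alpha_n=\alpha(k_n)$, and noting that $\|\alpha(i_n)-\alpha(k_n)\|_2=o(n)$ (hence also $\big|\,|\alpha(i_n)|-|\alpha(k_n)|\,\big|\le\sqrt d\,\|\alpha(i_n)-\alpha(k_n)\|_2=o(n)$) forces $\|\tilde\theta_n^{(i)}-\tilde\theta_n^{(k)}\|\to0$, we use $T(n,\alpha)=1/|a_{n,\alpha}|$ to write
$$\frac1n\log\frac{|a_{n,\alpha(i_n)}|}{|a_{n,\alpha(k_n)}|}=\log\bigl(T(n,\alpha(k_n))^{1/n}\bigr)-\log\bigl(T(n,\alpha(i_n))^{1/n}\bigr)=\log\tau(Z(D),\tilde\theta_n^{(k)})-\log\tau(Z(D),\tilde\theta_n^{(i)})+o(1),$$
which tends to $0$ by uniform continuity of $\log\tau(Z(D),\cdot)$.

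It remains to sketch the positive lower bound $\tau(Z(D),\tilde\theta)\ge c_0>0$, uniformly in $\tilde\theta$. For a monic homogeneous $P\in\mathcal P(n,\alpha)$, restricting to $\supp\nu$ --- where $P(t,t\lambda)=t^n\tilde p(\lambda)$ with $\tilde p:=P(1,\cdot)\in\mathcal P_d(\alpha)$ and $|t|=e^{-Q(\lambda)}$ --- gives $\|P\|_{Z(D)}\ge(\min_K e^{-Q})^{\,n}\,\|\tilde p\|_K$. Since $K$ is non-pluripolar, $V_K^*$ has logarithmic growth, so there is a finite constant $A_K$ with $V_K^*(z)\le\log^+\max_i|z_i|+A_K$; bounding $\tilde p/\|\tilde p\|_K$ by $e^{|\alpha|V_K^*}$ and extracting the leading coefficient $1$ of $\tilde p$ by a Cauchy integral over a torus $\{|z_i|=\rho\}$ with $\rho\ge1$ yields $\|\tilde p\|_K\ge e^{-|\alpha|A_K}\ge e^{-nA_K}$. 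Hence $\inf_{P\in\mathcal P(n,\alpha)}\|P\|_{Z(D)}\ge c_0^{\,n}$ with $c_0:=e^{-A_K}\min_K e^{-Q}>0$ independent of $n$ and $\alpha$, and since $\nu$ is Bernstein-Markov on $Z(D)$, $T(n,\alpha)\ge M_n^{-1}c_0^{\,n}$ with $M_n^{1/n}\to1$, so $\tau(Z(D),\tilde\theta)=\lim T(n,\alpha)^{1/n}\ge c_0$. I expect the main obstacle to be nothing more serious than having \eqref{lim} available on the \emph{closed} simplex (the contribution of \cite{BL}) and carrying out the compactness upgrade; everything else is routine.
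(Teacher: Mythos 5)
Your proof is correct and rests on the same mechanism as the paper's: compactness of $\Sigma_{d+1}$ combined with the validity of the limit \eqref{lim} on the \emph{closed} simplex supplied by \cite{BL}. The paper's version is leaner --- it runs the contradiction directly on the stated conclusion, so that both normalized index sequences converge to the same $\tilde\theta$ and \eqref{lim} forces both $\tfrac1n\log|a_{n,\alpha(i_n)}|$ and $\tfrac1n\log|a_{n,\alpha(k_n)}|$ to the same finite limit $-\log\tau(Z(D),\tilde\theta)$, thereby bypassing the continuity of $\tau(Z(D),\cdot)$ and the uniform positivity bounds you develop (though your lower-bound sketch correctly makes explicit the finiteness of $\log\tau(Z(D),\tilde\theta)$ that the paper's argument uses implicitly).
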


		\begin{proof}
		Suppose not. Then we can find a subsequence $Y \subset \NN$ and $\epsilon  > 0$ such that
		$$
		\bigg|\frac{1}{n} \log \frac{|a_{n,\alpha(i_n)}|}{|a_{n,\alpha(k_n)}|} \bigg| > \epsilon
		$$
		for all $n \in Y$. Now, since $\Sigma_{d+1}$ is compact, we can find a further subsequence $Y' \subset Y$ such that $(n - |\alpha(i_n)|, \alpha(i_n))/n \in \Sigma_{d+1}$ converges to a point $\tilde \theta \in \Sigma_{d+1}$ along $Y'$. Since $\|\alpha(i_n)-\alpha(k_n)\|_2=o(n)$, we similarly have that $(n - |\alpha(k_n)|, \alpha(k_n))/n \to \tilde \theta$. Therefore applying \eqref{lim} we have that
		$$
		\lim_{n \in Y'} \frac{1}{n} \log |a_{n,\alpha(i_n)}| = \lim_{n \in Y'} \frac{1}{n} \log |a_{n,\alpha(k_n)}| = - \log \tau(Z(D), \tilde \theta),
		$$
		 which contradicts the previous display.
		\end{proof}

		\noindent We will only need this result for $i_n$ and $k_n$ where
		$$\alpha(i_n)=\alpha(k_n)\pm (0,...,0,1,0,...,0).$$

		\begin{remark} In the univariate case, given $j \in \{0,1,...,n\}$ corresponding to $\alpha(j)=j$, the point $(n-j,j)/n$ is a point on the simplex $\Sigma_2$. Using the notation in this setting, Proposition \ref{03} reads:
			
			\begin{proposition}\label{three} For any sequence $\{k_n\}$ with $k_n\in \NN$ and $k_n=o(n)$, and any sequence $\{i_n\}$ with $i_n\in \NN$ and $i_n \in \{1,...,n\}$, 
				$$\lim_{n\to \infty} \frac{1}{n}\log \bigl(\frac{|a_{n,i_n}|}{|a_{n,i_n+k_n}|}\bigr)=0.$$
				
			\end{proposition}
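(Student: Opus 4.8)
The plan is to observe that Proposition \ref{three} is precisely the $d=1$ instance of Proposition \ref{03}, so the only work is to check that the hypotheses translate correctly. First I would unwind the notation in dimension one: the set of multi-indices $\NN^1$ is just $\NN$ with its usual order, so the $j$-th index in the ordering $\prec$ is $\alpha(j)=j$ (indexing degrees from $0$), $m_n=\binom{n+1}{1}=n+1$, and $a_{n,j}$ denotes the leading coefficient of the degree-$j$ member $p_{n,j}$ of the orthonormal basis for $\mathcal P_n$ in $L^2(e^{-2nQ}\tau)$, which is defined for $0\le j\le n$. In particular, for the ratio $|a_{n,i_n}|/|a_{n,i_n+k_n}|$ to be meaningful we need $0\le i_n,\,i_n+k_n\le n$; this holds in all the applications below, where in fact $|k_n|=1$.

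Then, given $\{i_n\}$ and $\{k_n\}$ as in the statement, I would apply Proposition \ref{03} with its sequence ``$k_n$'' taken to be our $i_n$ and its sequence ``$i_n$'' taken to be our $i_n+k_n$. The Euclidean-norm hypothesis of Proposition \ref{03} becomes $\|\alpha(i_n+k_n)-\alpha(i_n)\|_2=|(i_n+k_n)-i_n|=|k_n|=o(n)$, which holds by assumption, and both indices lie in $\{0,\dots,n\}\subset\{1,\dots,m_n\}$. Proposition \ref{03} then yields $\tfrac1n\log\tfrac{|a_{n,i_n+k_n}|}{|a_{n,i_n}|}\to 0$, and replacing the ratio by its reciprocal inside the logarithm gives exactly $\tfrac1n\log\bigl(|a_{n,i_n}|/|a_{n,i_n+k_n}|\bigr)\to 0$, which is the claim.

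There is essentially no obstacle here, since the statement is a direct specialization; the only point requiring a word of care is that the shifted index $i_n+k_n$ must stay in the admissible range $\{0,\dots,n\}$ so that $a_{n,i_n+k_n}$ is defined, which is automatic in every use of the proposition below. If a self-contained argument is preferred, one can instead repeat verbatim the two-line proof of Proposition \ref{03}: arguing by contradiction, pass to a subsequence along which $i_n/n\to t$ for some $t\in[0,1]$; since $k_n/n\to 0$ we also have $(i_n+k_n)/n\to t$, so both $(n-i_n,i_n)/n$ and $(n-i_n-k_n,\,i_n+k_n)/n$ converge in $\Sigma_2$ to $\tilde\theta=(1-t,t)$; then \eqref{lim}, which is valid on all of $\Sigma_2$ because $Z(D)$ is regular, forces $\tfrac1n\log|a_{n,i_n}|$ and $\tfrac1n\log|a_{n,i_n+k_n}|$ to share the common limit $-\log\tau(Z(D),\tilde\theta)$, contradicting a uniform lower bound on $\bigl|\tfrac1n\log(|a_{n,i_n}|/|a_{n,i_n+k_n}|)\bigr|$.
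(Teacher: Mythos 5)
Your proposal is correct and matches the paper exactly: the paper presents Proposition \ref{three} as nothing more than Proposition \ref{03} rewritten in univariate notation (with the point $(n-j,j)/n\in\Sigma_2$ playing the role of $\tilde\theta$), which is precisely your reduction, and your fallback subsequence argument is verbatim the proof of Proposition \ref{03}. Your side remark that $i_n+k_n$ must stay in $\{0,\dots,n\}$ for $a_{n,i_n+k_n}$ to be defined is a reasonable point of care that the paper leaves implicit.
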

			
			\noindent The proof of Proposition \ref{three} only requires a special case of the $\CC^2$ version of the main result in \cite{BL}. We provide a self-contained proof of this case in the appendix.
		
		\end{remark}
		
		\begin{remark} In the case of a sequence $\{\tau_n\}$ of asymptotically Bernstein-Markov measures for $K,Q$, the corresponding sequence $\{\nu_n\}$ is asymptotically Bernstein-Markov on $Z(D)$. The  $T(n,\alpha)$ in (\ref{Tna}) are computed using $\tau_n$; from the asymptotic Bernstein-Markov property of $\{\nu_n\}$ and \cite{BL}, equation (\ref{lim}) is still valid. \end{remark}

		\section{Strategy of proof of Theorem \ref{mainthd}}
		
		The following multidimensional version of Theorem 4.1 of \cite{BD} or \cite{BDprint}, which occurs as Proposition 3.5 in \cite{BBL},  is the guiding light for the proof of Theorem \ref{mainthd}.
		
		\begin{proposition} \label{keypropB} Let $\{G_n(z,\omega)\}$ be a sequence of random polynomials of the form (\ref{probhb2}) such that for some continuous function $V\in L(\CC^d)$, 
			\begin{enumerate}
				\item almost surely, $\{\frac{1}{n}\log |G_n|\}$ is locally uniformly bounded above on $\CC^d$;  
				\item almost surely, $[\limsup_{n\to \infty}\frac{1}{n}\log |G_n(z,\omega)|]^*\leq V(z)$ for all $z \in \CC^d$; and 
				\item there is a countable dense set of points $\{w_r\}$ in $\CC^d$ such that 
				$$\lim_{n\to \infty}\frac{1}{n}\log |G_n(w_r,\omega)|=V(w_r), \ r=1,2,...$$
				almost surely. 
			\end{enumerate}
			Then almost surely, $\frac{1}{n}\log |G_n|\to V$ in $L^1_{loc}(\CC^d)$.		
		\end{proposition}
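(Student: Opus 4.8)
The plan is to read Proposition~\ref{keypropB} as a deterministic statement about the functions $u_n:=\tfrac1n\log|G_n(\cdot,\omega)|$ for a fixed $\omega$ in the intersection of the three full-measure events in (1)--(3), and then exploit standard compactness properties of plurisubharmonic families. On this event $u_n(w_1)\to V(w_1)\in\RR$ by (3), so $G_n\not\equiv0$ for all large $n$ and $u_n$ is a genuine plurisubharmonic function on $\CC^d$ for those $n$; the finitely many other terms do not affect $L^1_{loc}$ convergence. Since $L^1_{loc}(\CC^d)$ is metrizable, it suffices to show that every subsequence of $\{u_n\}$ has a further subsequence converging to $V$ in $L^1_{loc}$. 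So fix a subsequence; by (1) it is locally uniformly bounded above, hence by the usual compactness theorem for plurisubharmonic functions (see, e.g., \cite{klimek}) it has a subsequence $\{u_{n_k}\}$ with either $u_{n_k}\to-\infty$ locally uniformly, or $u_{n_k}\to u$ in $L^1_{loc}$ for some plurisubharmonic $u$ on $\CC^d$. The first alternative would force $u_{n_k}(w_1)\to-\infty$, contradicting (3); so $u_{n_k}\to u$ in $L^1_{loc}$ with $u$ plurisubharmonic, and it remains only to identify $u=V$ a.e., since then $\int_K|u_{n_k}-V|\to0$ for every compact $K$.

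The inequality $u\ge V$ everywhere comes from the envelope bound. For any $z$ and small $r>0$, the sub-mean-value inequality bounds $u_{n_k}(z)$ by the average of $u_{n_k}$ over $B(z,r)$; by $L^1_{loc}$ convergence these averages tend to the average of $u$ over $B(z,r)$, which decreases to $u(z)$ as $r\downarrow0$. Hence $\limsup_k u_{n_k}(z)\le u(z)$ for every $z$, so, taking upper semicontinuous regularizations,
$$\tilde u:=\bigl(\limsup_k u_{n_k}\bigr)^*\le u\qquad\text{on }\CC^d.$$
Also $\limsup_k u_{n_k}\le\limsup_n u_n$ pointwise, so $\tilde u\le(\limsup_n u_n)^*\le V$ by (2); and by (3), $\tilde u(w_r)\ge\limsup_k u_{n_k}(w_r)=V(w_r)$ for each $r$, whence $\tilde u(w_r)=V(w_r)$. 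Since $\tilde u$ is plurisubharmonic (being $\ge V(w_1)>-\infty$ at $w_1$) hence upper semicontinuous, $V$ is continuous, and $\{w_r\}$ is dense, picking $w_r\to z_0$ for an arbitrary $z_0$ gives $V(z_0)=\lim_r V(w_r)=\limsup_r\tilde u(w_r)\le\tilde u(z_0)\le V(z_0)$, so $\tilde u\equiv V$; in particular $V\le u$ everywhere.

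For the reverse inequality a.e., recall that $L^1_{loc}$ convergence gives a.e.\ convergence along a subsequence; after a diagonal argument over an exhaustion of $\CC^d$ there is a subsequence $\{n_{k_j}\}$ with $u_{n_{k_j}}(z)\to u(z)$ for a.e.\ $z$. Since (2) gives $\limsup_n u_n\le(\limsup_n u_n)^*\le V$ pointwise, for such $z$ we get $u(z)=\lim_j u_{n_{k_j}}(z)\le\limsup_n u_n(z)\le V(z)$. Together with $u\ge V$ this gives $u=V$ a.e., so $u_{n_k}\to V$ in $L^1_{loc}$, completing the proof.

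The only points needing care are the invocation of the compactness theorem for plurisubharmonic families (producing the $L^1_{loc}$-convergent subsequence and, via (3), ruling out locally uniform divergence to $-\infty$) and the extraction of an a.e.-convergent subsequence from $L^1_{loc}$ convergence; everything else is bookkeeping with the sub-mean-value property, the regularization $\tilde u$, and the continuity of $V$ along the dense set $\{w_r\}$. It is cleanest to intersect the three almost-sure events at the very start so that the remainder of the argument is purely deterministic.
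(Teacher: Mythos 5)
Your argument is correct. Note that the paper does not actually prove Proposition \ref{keypropB}; it imports it as Proposition 3.5 of \cite{BBL} (a multidimensional version of Theorem 4.1 of \cite{BD}), and your proof is essentially the standard one underlying that result: the compactness theorem for locally uniformly bounded-above plurisubharmonic families, with condition (3) at $w_1$ ruling out both $G_n\equiv 0$ and locally uniform divergence to $-\infty$, and identification of the $L^1_{loc}$ limit with $V$ via the upper regularization $\tilde u$, conditions (2)--(3), and the continuity of $V$ on the dense set.
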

		
		To show that
		$$\frac{1}{n}\log |G_n|\to V_K \ \hbox{in} \ L^1_{loc}(\CC^d)$$ 
		in probability, it suffices to show that for any sequence $Y\subset \NN$ there is a further subsequence $Y_1\subset Y$ such that  
		$$\lim_{n\to \infty, \ n\in Y_1} \frac{1}{n}\log |G_n|= V_K \ \hbox{in} \ L^1_{loc}(\CC^d)$$
		almost surely. The first two properties in Proposition \ref{keypropB} of the full sequence $\{\frac{1}{n}\log |G_n|\}$ follow as in \cite{BBL} under the hypothesis that $\frac{1}{2n}\log B_n\to V_{K,Q}$ locally uniformly on $\CC^d$; this condition holds for bases satisfying (1) or (2) in the introduction by Proposition \ref{locunif}. Thus we concentrate on the third property for a subsequence.

		
From the previous observations, to apply Proposition \ref{keypropB} in this setting it suffices to prove the following.
		
		\begin{proposition} \label{01} Under the hypothesis \eqref{E:log13-multi}, there exists a countable dense set $\{z_j\}\subset \CC^d$ such that for any subsequence $Y\subset \NN$ there exists a further subsequence $Y_0\subset Y$ such that
			\begin{equation}\label{like3} \lim_{n\to \infty, \ n\in Y_0}\frac{1}{n}\log |G_n(z_j)|=V_{K,Q}(z_j) \end{equation}
			almost surely for every $z_j$.
		\end{proposition}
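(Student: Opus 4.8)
\emph{Plan.} The proof reduces to two one-point, in-probability estimates, which are then upgraded to almost-sure convergence along a subsequence by Borel--Cantelli. Fix once and for all a countable dense set $\{z_j\}\subset\CC^d$ contained in the complement of a pluripolar set $E$ (to be pinned down below); since $E$ is pluripolar, $\CC^d\setminus E$ is dense, so such a choice exists. It suffices to prove that for each $z=z_j$ and each $\epsilon=1/m$,
\[
\mathbb P\bigl(\tfrac1n\log|G_n(z)|>V_{K,Q}(z)+\epsilon\bigr)\to0\qquad\text{and}\qquad\mathbb P\bigl(\tfrac1n\log|G_n(z)|<V_{K,Q}(z)-\epsilon\bigr)\to0
\]
as $n\to\infty$. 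Indeed, given any subsequence $Y\subset\NN$, the fact that each of these probabilities tends to $0$ lets one extract, diagonalizing over the countably many pairs $(z_j,1/m)$, a further subsequence $Y_0\subset Y$ along which all of them are summable; Borel--Cantelli and a countable intersection over $(j,m)$ then yield \eqref{like3} almost surely for every $z_j$.

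The upper estimate is elementary. By Cauchy--Schwarz, $|G_n(z)|\le\bigl(\max_{|\alpha|\le n}|\xi^{(n)}_\alpha|\bigr)\sum_{|\alpha|\le n}|p_{n,\alpha}(z)|\le\bigl(\max_{|\alpha|\le n}|\xi^{(n)}_\alpha|\bigr)\sqrt{m_nB_n(z)}$, and $\tfrac1{2n}\log\bigl(m_nB_n(z)\bigr)\to V_{K,Q}(z)$ by Proposition \ref{locunif} together with $m_n=\binom{n+d}{d}=O(n^d)$. So it only remains to see that $\tfrac1n\log\max_{|\alpha|\le n}|\xi^{(n)}_\alpha|\to0$ in probability, which follows from a union bound over the $O(n^d)$ indices and hypothesis \eqref{E:log13-multi}: $\mathbb P\bigl(\max_\alpha\log(1+|\xi^{(n)}_\alpha|)>\epsilon n\bigr)\le m_n\cdot o\bigl((\epsilon n)^{-d}\bigr)=o(1)$.

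The lower estimate is the crux, and is where the possibility of atoms forces an argument different from \cite{BBL}. Fix $\epsilon>0$ and put $\lambda_n:=e^{n(V_{K,Q}(z)-\epsilon)}$. Writing $G_n(z)=\sum_{|\alpha|\le n}X_\alpha$, $X_\alpha:=\xi^{(n)}_\alpha p_{n,\alpha}(z)$, a sum of independent $\CC$-valued random variables, the non-degeneracy of the common law of the $\xi$'s provides $\sigma_0,\delta_0>0$ with $\sup_{w\in\CC}\mathbb P(|\xi-w|\le\sigma_0)\le1-\delta_0$. Conditioning on an arbitrary subcollection of the $X_\alpha$ and applying the (two-dimensional) Kolmogorov--Rogozin inequality to the remaining ones bounds the Lévy concentration function, uniformly over the conditioning:
\[
\mathbb P\bigl(|G_n(z)|\le\lambda_n\bigr)\le C\Bigl(\textstyle\sum_{|\alpha|\le n}\min\bigl(1,\sigma_0^2|p_{n,\alpha}(z)|^2\lambda_n^{-2}\bigr)\Bigr)^{-1/2}.
\]
Hence the lower estimate follows once one knows the \emph{dispersion property}
\begin{equation}\label{eq:dispersion-plan}
\sum_{|\alpha|\le n}\min\bigl(1,\,|p_{n,\alpha}(z)|^2\,e^{-2n(V_{K,Q}(z)-\epsilon)}\bigr)\longrightarrow\infty\qquad(n\to\infty),
\end{equation}
i.e.\ that the mass $B_n(z)=\sum_\alpha|p_{n,\alpha}(z)|^2$ is asymptotically \emph{not} concentrated on a bounded set of multi-indices. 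I expect \eqref{eq:dispersion-plan} to be the main obstacle, and essentially the content of Theorem \ref{02}.

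To establish \eqref{eq:dispersion-plan} I would proceed as follows. Write $p_{n,\alpha}=a_{n,\alpha}\tilde p_{n,\alpha}$ with $\tilde p_{n,\alpha}\in\mathcal P(\alpha)$ the $L^2(w^{2n}\tau)$-minimal monic polynomial. By \eqref{lim}---which rests on the Bernstein--Markov property of $\nu$ on $Z(D)$ and the continuity of $\tilde\theta\mapsto\tau(Z(D),\tilde\theta)$ \emph{up to the boundary} of $\Sigma_{d+1}$ from \cite{BL}---one has $\tfrac1n\log|a_{n,\alpha}|\to-\log\tau(Z(D),\tilde\theta)$ as $(n-|\alpha|,\alpha)/n\to\tilde\theta$, a continuous function on $\Sigma_{d+1}$; Proposition \ref{03}, applied to the unit shifts $\alpha\mapsto\alpha\pm e_j$, supplies the stability of the leading coefficients under small changes of $\alpha$ needed to pass from convergence along directions to uniform control. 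Combining this with the Chebyshev-polynomial nature of $\tilde p_{n,\alpha}$, one shows that for $z\notin E$ the functions $\tfrac1n\log|p_{n,\alpha}(z)|$ converge, uniformly in $\alpha$, to a continuous function $\Phi_z$ on $\Sigma_{d+1}$ whose maximum is $V_{K,Q}(z)$ (the Zaharjuta-type representation of the weighted extremal function, cf.\ \cite{Z,Bloom}; given the uniform convergence, $\max\Phi_z=V_{K,Q}(z)$ also just rereads Proposition \ref{locunif} at $z$). Continuity then forces $\{\tilde\theta:\Phi_z(\tilde\theta)>V_{K,Q}(z)-\epsilon/2\}$ to be a non-empty open subset of the $d$-dimensional simplex $\Sigma_{d+1}$, hence to contain at least $c\,n^d$ lattice points of the form $(n-|\alpha|,\alpha)/n$ for some $c>0$ and all large $n$; for each such $\alpha$ one has $|p_{n,\alpha}(z)|^2e^{-2n(V_{K,Q}(z)-\epsilon)}\ge e^{n\epsilon/2}>1$ eventually, so that index contributes $1$ to the sum in \eqref{eq:dispersion-plan}, proving \eqref{eq:dispersion-plan}. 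Feeding $c_n\ge c\,n^d\to\infty$ into the displayed Kolmogorov--Rogozin bound gives $\mathbb P(|G_n(z)|\le\lambda_n)\le C(c\,n^d)^{-1/2}\to0$, completing the lower estimate and hence Proposition \ref{01}. The genuinely technical points---the uniform convergence $\tfrac1n\log|p_{n,\alpha}(z)|\to\Phi_z$ and the determination of the exceptional set $E$ via the comparison between $\tilde p_{n,\alpha}$ and the monomial $z^\alpha$---are what I expect to occupy Section 4.
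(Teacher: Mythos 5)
Your overall architecture coincides with the paper's: reduce to in-probability convergence of $\tfrac1n\log|G_n(z)|$ at each point of a dense set and upgrade by diagonalization and Borel--Cantelli; prove the upper bound by a union bound over the $O(n^d)$ coefficients together with \eqref{E:log13-multi} and Proposition \ref{locunif}; prove the lower bound by the Kolmogorov--Rogozin inequality, which requires exactly that the number of indices $\alpha$ with $\tfrac1n\log|p_{n,\alpha}(z)|\ge V_{K,Q}(z)-\epsilon$ tends to infinity. This is the paper's Lemma \ref{L:thm-41-variant}, and you correctly isolate the remaining ``dispersion property'' as the content of Theorem \ref{02}. (A minor point: your displayed concentration bound with $\min\bigl(1,\sigma_0^2|p_{n,\alpha}(z)|^2\lambda_n^{-2}\bigr)$ is not valid for a general non-degenerate $\xi$, since $1-\mathcal Q(\xi;r)$ can vanish identically for large $r$; but you only use the terms where the minimum equals $1$, so this is cosmetic.)

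The genuine gap is your proof of the dispersion property. You assert that $\tfrac1n\log|p_{n,\alpha}(z)|$ converges, uniformly in $\alpha$, to a continuous profile $\Phi_z$ on $\Sigma_{d+1}$ with $\max\Phi_z=V_{K,Q}(z)$, and then count $\gtrsim n^d$ lattice directions in the open set $\{\Phi_z>V_{K,Q}(z)-\epsilon/2\}$. Nothing of the sort is established, and it is far stronger than what \eqref{lim} and \cite{BL} provide: those results control only the leading coefficients $|a_{n,\alpha}|^{1/n}$ (equivalently the $L^2$ norms of the monic minimizers $\tilde p_{n,\alpha}$), not the pointwise values $|\tilde p_{n,\alpha}(z)|^{1/n}$ at a fixed $z$. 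The existence and continuity in $\tilde\theta$ of such pointwise directional limits is precisely the kind of explicit control of high-degree orthonormal polynomials for arbitrary $K,Q$ that the paper states it cannot obtain and deliberately circumvents; accordingly the paper proves only $J_n(z,\epsilon)\to\infty$, not a rate $\gtrsim n^d$. Its argument splits into two cases: when $V_{K,Q}(z)=\inf Q$, a Cartan-type estimate (Lemma \ref{cartan2}) shows that the low-degree $p_{n,\alpha}$ are asymptotically flat off a Lebesgue-null set; when $V_{K,Q}(z)>\inf Q$, an induction bootstraps a single near-maximal index (supplied by Proposition \ref{locunif}) to $k$ of them for every $k$, by expanding $z_jp_{n,\alpha^-}$ in the orthonormal basis and invoking Proposition \ref{03} to control the ratio of leading coefficients under the unit shift $\alpha\mapsto\alpha-e_j$. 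As written, your route to the dispersion property rests on an unproven (and likely false in this generality) uniform-convergence claim, and would need to be replaced by something like the paper's bootstrapping argument.
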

		
				
				Proposition \ref{01} follows from the next result.

				\begin{theorem} \label{02} Fix $\epsilon >0$, and $z\in \CC^d$. For $n\in \NN$, define
					$$J_n(z,\epsilon):=\# \{\alpha: |\alpha|\leq n \ \hbox{and} \ \frac{1}{n}\log|p_{n,\alpha}(z)|\geq V_{K,Q}(z)-\epsilon\}.$$
					Then there exists a set $M \subset \mathbb C^d$ of Lebesgue measure $0$ such that for any $z \in M^c$ and any $\epsilon > 0$, we have
					\begin{equation}
					\label{E:limnn}
						\lim_{n\to \infty} J_n(z,\epsilon) = \infty.
					\end{equation}
				\end{theorem}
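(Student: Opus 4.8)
The plan is to realize the null set $M$ as a countable union of null sets and to obtain each of them from a Borel--Cantelli argument applied to a localized form of the failure of \eqref{E:limnn}. Fix a closed ball $\bar B\subset\CC^d$ (from a fixed countable basis), a rational $\epsilon>0$ and $L\in\NN$. Since $J_n(\cdot,\epsilon)$ is monotone in $\epsilon$, it suffices to prove that
$$
\Big|\big\{z\in\bar B:\ J_n(z,\epsilon)\le L \text{ for infinitely many } n\big\}\Big|=0
$$
and then intersect the resulting null sets over all triples $(\bar B,\epsilon,L)$. By Borel--Cantelli this reduces to producing an upper bound for $\mu_n:=|\{z\in\bar B:\ J_n(z,\epsilon)\le L\}|$ that is summable in $n$; so the whole problem becomes a quantitative, single-$n$ estimate on the Lebesgue measure of a sublevel-type set.

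The first ingredient is a dominant index. By Proposition \ref{locunif}, $\tfrac1{2n}\log B_n\to V_{K,Q}$ locally uniformly, and since $\max_\alpha|p_{n,\alpha}(z)|^2\le B_n(z)\le m_n\max_\alpha|p_{n,\alpha}(z)|^2$ with $\tfrac1n\log m_n\to0$, every $z$ admits an index $\alpha^\sharp=\alpha^\sharp_n(z)$ with $\tfrac1n\log|p_{n,\alpha^\sharp}(z)|=V_{K,Q}(z)+o(1)$, the $o(1)$ uniform on $\bar B$; thus $\alpha^\sharp$ is counted by $J_n(z,\epsilon)$ once $n$ is large. The point is to show that a whole \emph{cluster} of multi-indices around $\alpha^\sharp$ is also counted, with cardinality $\to\infty$. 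To move from $\alpha$ to an adjacent index $\alpha+e_k$ (when $|\alpha|<n$), note that by \eqref{pnal} and the ordering property $z_k\tilde p_{n,\alpha}\in\mathcal P(\alpha+e_k)$, so minimality of $\tilde p_{n,\alpha+e_k}$ gives $z_k\tilde p_{n,\alpha}=\tilde p_{n,\alpha+e_k}+r_{n,\alpha,k}$ with $r_{n,\alpha,k}\in\Span\{z^\beta:\beta\prec\alpha+e_k\}$ and, using $z_k\tilde p_{n,\alpha}\in\mathcal P(\alpha+e_k)$ once more, $\|w^nr_{n,\alpha,k}\|_{L^2(\tau)}\le 2\|z_k\|_K|a_{n,\alpha}|^{-1}$. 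Multiplying by $a_{n,\alpha+e_k}$ and applying Proposition \ref{03} (which, via the $\CC^{d+1}$ picture of Section \ref{S:prelim} --- $Z(D)$ regular, $\nu$ Bernstein--Markov, and the directional Chebyshev limit \eqref{lim} --- forces $|a_{n,\alpha+e_k}|=|a_{n,\alpha}|e^{o(n)}$) yields
$$
p_{n,\alpha+e_k}(z)=\lambda_{n,\alpha,k}\,z_k\,p_{n,\alpha}(z)-R_{n,\alpha,k}(z),\qquad |\lambda_{n,\alpha,k}|=e^{o(n)},\quad \|w^nR_{n,\alpha,k}\|_{L^2(\tau)}\le e^{o(n)},
$$
with the $o(n)$'s uniform in $\alpha,k$. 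By the Bernstein--Markov property and the weighted Bernstein--Walsh inequality, $\tfrac1n\log|R_{n,\alpha,k}(z)|\le V_{K,Q}(z)+o(1)$ uniformly, and the displayed identity shows that $\tfrac1n\log|p_{n,\alpha}(z)|$ changes by only $o(1)$ under $\alpha\mapsto\alpha+e_k$ \emph{provided} $z$ avoids the set where $R_{n,\alpha,k}$ nearly cancels $z_kp_{n,\alpha}(z)$, equivalently where the monic polynomial $\tilde p_{n,\alpha+e_k}$ (of degree $\le n+1$) is exceptionally small relative to $z_k\tilde p_{n,\alpha}$; a sublevel-set (Remez--Cartan type) inequality for polynomials in $\CC^d$ bounds the Lebesgue measure of that exceptional set. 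Iterating the step over paths of $\lesssim\epsilon n$ coordinate moves out of $\alpha^\sharp$ (along which Proposition \ref{03} still gives $|a_{n,\alpha}/a_{n,\alpha^\sharp}|=e^{o(n)}$), one produces, off the union of the exceptional sets, a cluster of multi-indices $\alpha$ with $\tfrac1n\log|p_{n,\alpha}(z)|\ge V_{K,Q}(z)-2\epsilon$ and cardinality $\to\infty$; so $J_n(z,2\epsilon)\to\infty$ there, and $\mu_n$ (after replacing $\epsilon$ by $\epsilon/2$) is controlled by the sum of the exceptional-set measures.

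The main obstacle --- and the reason the proof is completely different from the argument in \cite{BBL} --- is making the bound on $\mu_n$ genuinely summable in $n$. Treated crudely, a single coordinate step only yields an exceptional set of measure bounded by an $n$-independent constant once the ``gap'' one insists on is merely exponential in $n$ (the exponent $\sim 1/\deg$ in the sublevel inequality exactly cancels the exponential), so one cannot afford such a loss at each of the $\Theta(n)$ steps. The remedy is not to fix a single exponential scale: one decomposes $z\in\bar B$ according to the local rate at which $|p_{n,\alpha}(z)|$ decays along coordinate paths emanating from $\alpha^\sharp_n(z)$ --- this rate being finite for a.e.\ $z$ --- and tracks, scale by scale, the competition between this decay and the $e^{o(n)}$ factors from Proposition \ref{03}, using the reproducing-kernel identity \eqref{bndef} together with the uniform control \eqref{lim} to bound the total measure of those $z$ for which the cluster around $\alpha^\sharp_n(z)$ has fewer than $L$ elements. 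Arranging this bookkeeping so that the per-$n$ bad-set measure decays geometrically (hence summably) --- i.e.\ reconciling the a.e.\ conclusion with the non-decaying per-step sublevel estimate --- is, I expect, the technical heart of the argument; everything else is standard weighted (pluri-)potential theory plus the sublevel-set inequality.
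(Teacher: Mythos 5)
There is a genuine gap, and you essentially flag it yourself in the last paragraph: the ``bookkeeping'' needed to make the per-$n$ exceptional-set measure summable is not an afterthought but the entire difficulty, and your setup makes it unresolvable as stated. Your index-shifting step writes $p_{n,\alpha+e_k}(z)=\lambda_{n,\alpha,k}z_kp_{n,\alpha}(z)-R_{n,\alpha,k}(z)$ and needs to exclude cancellation between the two terms on the right. The polynomial whose smallness you must exclude has degree of order $n$, so Lemma \ref{cartan2} at the exponential scale $e^{-\epsilon n}$ gives only the $n$-independent bound $C e^{-2\epsilon}$: not summable in $n$, and not small enough to union over the $\Theta(n)$ steps of a path, let alone over all paths and all $\alpha$. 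No rescaling of $\epsilon$ fixes this, because the exponent $1/\deg$ in Cartan exactly cancels the exponential gain, as you note. So the proposal reduces the theorem to a claim you do not prove and which, in this form, I do not believe can be proved by sublevel-set estimates alone.

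The paper's proof is structured precisely to avoid this. It splits on whether $V_{K,Q}(z)=\inf Q$ or $V_{K,Q}(z)>\inf Q$. In the first case only multi-indices of \emph{bounded} (more generally $o(n/\log n)$) length are relevant, and there Cartan's estimate applied to $p_{n,\alpha}$ with $\alpha$ fixed -- degree $|\alpha|$ fixed, scale $e^{-\epsilon n/|\alpha|}$ -- \emph{is} summable in $n$, so Borel--Cantelli yields Lemma \ref{L:poly-small} and the null set $M$ is just the countable union of the resulting exceptional sets, one per fixed $\alpha$. In the second case no exceptional set is needed at all: Lemma \ref{L:general-case} holds for \emph{every} $z$ with $V_{K,Q}(z)>\inf Q$. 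The trick is to run your relation in the opposite direction and as an inequality rather than an identity: starting from a known good index $\alpha_k$ (so $|p_{n,\alpha_k}(z)|$ is large), expand $z_jp_{n,\alpha_k^-}=\sum_{\beta\preceq\alpha_k}\lambda_\beta p_{n,\beta}$, isolate $\lambda_{\alpha_k}p_{n,\alpha_k}$, and apply the triangle inequality to get $|\lambda_{\alpha_k}p_{n,\alpha_k}(z)|\le c_{K,z}\sum_{\beta\prec\alpha_k}|p_{n,\beta}(z)|$. Since Proposition \ref{03} controls $\lambda_{\alpha_k}=a_{n,\alpha_k^-}/a_{n,\alpha_k}$ from below, the pigeonhole principle produces a new good index $\beta\prec\alpha_k$ with no cancellation to rule out; induction on the number of good indices finishes the argument. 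Your use of Propositions \ref{locunif} and \ref{03} is on target, but without this reversal of direction (largeness of one term forcing largeness of the max on the other side, rather than an identity threatened by cancellation) the argument does not close.
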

				
				The proof of Theorem \ref{02} is the main novel idea in the present paper and is contained in the next section. Assuming Theorem \ref{02}, we can proceed similarly to \cite{BD} to prove Proposition \ref{01}. The key step deduces \eqref{like3} from the Kolmogorov-Rogozin inequality and \eqref{E:limnn}. This lemma explains how the condition \eqref{E:log13-multi} enters into the main theorem.
				
				\begin{lemma}
				\label{L:thm-41-variant}
				Let $\{z_{n, \alpha} : |\alpha| \le n, \alpha \in \NN^d, n \in \NN \}$ be an array of complex numbers and for $M \in \RR$  define
				$$
				K_n(M) = \# \{\alpha : |\alpha| \le n \text{ and } \tfrac{1}{n} \log |z_{n, \alpha}| \ge M\}.
				$$
				Suppose there exists $m_0 \in \RR$ such that 
				\begin{equation}
					\label{E:twin-conditions}
\lim_{n \to \infty} K_n(M) = \infty, \; M < m_0, \qquad \lim_{n \to \infty} K_n(M) = 0, \; M > m_0.
				\end{equation}
				
				Let $\{\xi_{n, \alpha} : |\alpha| \le n, \alpha \in \NN^d, n \in \NN \}$ be an array of i.i.d.\ non-degenerate complex random variables satisfying \eqref{E:log13-multi}. Then
				$$
				\lim_{n \to \infty} \frac{1}{n} \log \big| \sum_{|\alpha| \le n} \xi_{n, \alpha} z_{n, \alpha} \big| = m_0
				$$ 
				in probability.
				\end{lemma}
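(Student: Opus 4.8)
The plan is to fix $\epsilon>0$ and prove the two one-sided bounds $\mathbb P\big(\tfrac1n\log|W_n|>m_0+\epsilon\big)\to0$ and $\mathbb P\big(\tfrac1n\log|W_n|<m_0-\epsilon\big)\to0$ separately, where $W_n:=\sum_{|\alpha|\le n}\xi_{n,\alpha}z_{n,\alpha}$; together these give convergence in probability to $m_0$. Throughout I will use that the number $m_n=\binom{n+d}{d}$ of summands grows only polynomially in $n$.

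For the upper bound I would use crude estimates. By \eqref{E:twin-conditions} we have $K_n(m_0+\epsilon/2)=0$ for all large $n$, i.e.\ $|z_{n,\alpha}|\le e^{n(m_0+\epsilon/2)}$ for every $\alpha$ with $|\alpha|\le n$. For the random coefficients, \eqref{E:log13-multi} together with $\log(1+e^{s})\ge s$ gives $\mathbb P\big(|\xi_{n,\alpha}|\ge e^{n\epsilon/4}\big)\le\mathbb P\big(\log(1+|\xi_{n,\alpha}|)\ge n\epsilon/4\big)=o(n^{-d})$, so a union bound over the $m_n=O(n^d)$ indices shows that with probability tending to $1$ every $|\xi_{n,\alpha}|$, $|\alpha|\le n$, is below $e^{n\epsilon/4}$. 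On that event the triangle inequality gives $|W_n|\le m_n\cdot e^{n\epsilon/4}\cdot e^{n(m_0+\epsilon/2)}\le e^{n(m_0+\epsilon)}$ once $n$ is large, which is the desired bound.

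For the lower bound, which is where the Kolmogorov--Rogozin inequality enters, set $\lambda_n:=e^{n(m_0-\epsilon)}$ and $S_n:=\{\alpha:|\alpha|\le n,\ \tfrac1n\log|z_{n,\alpha}|\ge m_0-\epsilon/2\}$, so that $N_n:=|S_n|=K_n(m_0-\epsilon/2)\to\infty$ by \eqref{E:twin-conditions}; note $z_{n,\alpha}\ne0$ for $\alpha\in S_n$. Write $W_n=W_n'+W_n''$ with $W_n':=\sum_{\alpha\in S_n}\xi_{n,\alpha}z_{n,\alpha}$, the two parts being independent. I would work with the L\'evy concentration function $Q(Y,r):=\sup_{c\in\mathbb C}\mathbb P(|Y-c|\le r)$ and use three elementary facts: (i) $Q(W_n,\lambda_n)\le Q(W_n',\lambda_n)$, since adjoining an independent summand cannot increase the concentration function; (ii) $Q(\xi_{n,\alpha}z_{n,\alpha},\lambda_n)=Q(\xi,\lambda_n/|z_{n,\alpha}|)$, by the substitution $c\mapsto c/z_{n,\alpha}$; and (iii) $r\mapsto Q(\xi,r)$ is nondecreasing with $\lim_{r\to0^+}Q(\xi,r)=\sup_{c}\mathbb P(\xi=c)=:q_\xi<1$, the strict inequality holding because $\xi$ is non-degenerate. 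Since $\lambda_n/|z_{n,\alpha}|\le e^{-n\epsilon/2}\to0$ for $\alpha\in S_n$, facts (ii)--(iii) produce a $q_1\in(q_\xi,1)$ with $Q(\xi_{n,\alpha}z_{n,\alpha},\lambda_n)\le q_1$ for all $\alpha\in S_n$ and all large $n$. Applying the two-dimensional Kolmogorov--Rogozin inequality to the independent variables $\{\xi_{n,\alpha}z_{n,\alpha}:\alpha\in S_n\}$ then yields, for a constant $C$,
$$
Q(W_n',\lambda_n)\ \le\ C\Big(\sum_{\alpha\in S_n}\big(1-Q(\xi_{n,\alpha}z_{n,\alpha},\lambda_n)\big)\Big)^{-1/2}\ \le\ C\big(N_n(1-q_1)\big)^{-1/2}\ \longrightarrow\ 0,
$$
and hence $\mathbb P\big(\tfrac1n\log|W_n|<m_0-\epsilon\big)=\mathbb P(|W_n|<\lambda_n)\le Q(W_n,\lambda_n)\le Q(W_n',\lambda_n)\to0$ using (i).

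The step I expect to be the main obstacle is the appeal to the \emph{multidimensional} Kolmogorov--Rogozin inequality: the one-dimensional version applied to $\Re W_n'$ or $\Im W_n'$ alone does not suffice, since multiplication by a fixed $z_{n,\alpha}$ can place $\xi_{n,\alpha}z_{n,\alpha}$ on an affine line and make one real coordinate nearly deterministic even though $\xi$ is non-degenerate. Casting everything in terms of the rotation-covariant concentration function $Q(\cdot,r)$ above, which depends on $z_{n,\alpha}$ only through $|z_{n,\alpha}|$, is precisely what makes the bound $Q(\xi_{n,\alpha}z_{n,\alpha},\lambda_n)\le q_1$ uniform over $\alpha\in S_n$. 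Finally, I note that atoms in the law of $\xi$ cause no trouble: only $q_\xi<1$ is used, and this holds for every non-degenerate $\xi$, which is exactly why Theorem~\ref{mainthd} requires no atomlessness hypothesis.
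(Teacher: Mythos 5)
Your proposal is correct and follows essentially the same route as the paper: the upper bound via $K_n(m_0+\epsilon/2)=0$, a union bound from \eqref{E:log13-multi} and the triangle inequality, and the lower bound via the L\'evy concentration function and the Kolmogorov--Rogozin inequality, using only non-degeneracy of $\xi$ to get $1-\mathcal Q(\xi; e^{-n\epsilon/2})\ge\delta$. The only cosmetic difference is that you discard the indices outside $S_n$ \emph{before} applying Kolmogorov--Rogozin (via monotonicity of the concentration function under independent addition), whereas the paper applies the inequality to the full sum and then drops the nonnegative terms with $\beta\notin S_n$ from the resulting denominator.
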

				
				The proof is a slight variant of the first part of the proof of Theorem 5.2 in \cite{BD}.
				
				\begin{proof}
					We first show that for any $\epsilon > 0$, that
					\begin{equation}
					\label{E:upper-bd}
					\lim_{n \to \infty} \PP \big(\frac{1}{n} \log \big| \sum_{|\alpha| \le n} \xi_{n, \alpha} z_{n, \alpha} \big| > m_0 + \epsilon \big) = 0.
					\end{equation}
					We have the bound
					$$
					\frac{1}{n} \log \big| \sum_{|\alpha| \le n} \xi_{n, \alpha} z_{n, \alpha} \big| \le \frac{1}{n} \log \left(\max_{|\alpha| \le n} |\xi_{n, \alpha} |\right) + \frac{1}{n} \log \left(\max_{|\alpha| \le n} |z_{n, \alpha} |\right) + \frac{d \log n}{n}.
					$$
					The second term on the right-hand side above is at most $m_0 + \epsilon/2$ for large enough $n$ since $K_n(m_0 + \epsilon/2) = 0$ for large enough $n$, and the third term tends to $0$ with $n$. To bound the first term, observe that by a union bound and condition \eqref{E:log13-multi},
					\begin{align*}
	\PP(\log |\xi_{n, \alpha} | &> n\epsilon/4 \text{ for some } |\alpha| \le n) \\
	&\le \binom{n+d}{d} \PP(\log |\xi_{n, 0} | > n\epsilon/4) = o(1), 
					\end{align*}
					which implies that the first term in the previous display is at most $\epsilon/4$ with probability tending to $1$ with $n$. This yields \eqref{E:upper-bd}.
				
				To complete the proof, we show that for any $\epsilon > 0$, 
				\begin{equation}
					\label{E:lower-bd}
					\lim_{n \to \infty} \PP \big(\frac{1}{n} \log \big| \sum_{|\alpha| \le n} \xi_{n, \alpha} z_{n, \alpha} \big| < m_0 - \epsilon \big) = 0.
				\end{equation}
				To prove \eqref{E:lower-bd}, recall that for a complex random variable $X$ and $r > 0$, its concentration function is
				$$
				\mathcal Q(X; r) := \sup_{c \in \CC} \PP(|X - c| < r).
				$$
				Using the concentration, we can bound the probability in \eqref{E:lower-bd} above by
				$$
				\mathcal Q\Big( \sum_{|\alpha| \le n} \xi_{n, \alpha} z_{n, \alpha} \;;\; e^{n(m_0 - \epsilon)} \Big), 
				$$
				which by the Kolmogorov-Rogozin inequality for sums of independent random variables (see \cite{E}, Corollary 1) is bounded above by
				\begin{equation}
					\label{E:CBIG}
	C \Big(\sum_{|\alpha| \le n} [1-\mathcal Q( \xi_{n, \alpha} z_{n, \alpha}  \;;\; e^{n(m_0 - \epsilon)} )] \Big)^{-1/2},
				\end{equation}
				 where $C> 0$ is an absolute constant. Now, if $|z_{n, \alpha}| > e^{n(m_0 - \epsilon/2)}$, then
				 $$
				 \mathcal Q( \xi_{n, \alpha} z_{n, \alpha}  \;;\; e^{n(m_0 - \epsilon)} ) \le \mathcal Q( \xi_{n, \alpha} e^{n(m_0 - \epsilon/2)} \;;\; e^{n(m_0 - \epsilon)} ) = \mathcal Q(\xi_{n, \alpha}; e^{-n \epsilon/2}).
				 $$
				Since the $\xi_{n, \alpha}$ are non-degenerate, for large enough $n$, the right-hand side is bounded above by $1 - \delta$ for some $\delta > 0$. Therefore for such $n$, \eqref{E:CBIG} is bounded above by
				$$
				C [K_n(m_0 - \epsilon/2) \delta ]^{-1/2},
				$$
				which tends to $0$ with $n$ since $K_n(m_0 - \epsilon/2) \to \infty$. This proves \eqref{E:lower-bd}, completing the proof of the lemma.
				\end{proof}

				\begin{proof}[Proof of Proposition \ref{01}] It suffices to find a dense set $M^c \subset \CC^d$ such that for any $z \in M^c$ we have
					\begin{equation}
					\label{E:like33}
					\lim_{n\to \infty}\frac{1}{n}\log |G_n(z)|=V_{K,Q}(z) \qquad \text{ in probability.}
					\end{equation}
					Then, given a countable dense subset $\{z_j\} \subset M^c$, given any subsequence $Y$ we can use a diagonalization argument to pass to a further subsequence $Y_0$ on which \eqref{E:like33} holds almost surely, as required.
					
				Take $M$ as in Theorem \ref{02}, so that for $z \in M^c$ we have
				\begin{equation}\label{key3} \lim_{n\to \infty}  J_n(z,\epsilon)=\infty
				\end{equation}
				for all $\epsilon > 0$. We apply Lemma \ref{L:thm-41-variant} with $z_{n, \alpha} = p_{n, \alpha}(z)$. With notation as in that lemma, \eqref{key3} implies that $K_n(V_{K, Q}(z) - \epsilon) \to \infty$ for all $\epsilon > 0$. Moreover for any $\epsilon > 0$, $K_n(V_{K, Q}(z) + \epsilon) = 0$ for all large enough $n$ by Proposition \ref{locunif}. Applying Lemma \ref{L:thm-41-variant} yields \eqref{E:like33}.
				\end{proof}
				
				\begin{proof}[Proof of Theorem \ref{mainthd}, Theorem \ref{T:mainth-dim1}, Corollary \ref{C:zero-currents}]
				By Proposition \ref{01} and the discussion after Proposition \ref{keypropB}, we can find a countable dense set $\{z_j\} \subset \CC^d$ such that given any subsequence $Y \subset \NN$ we can find a further subsequence $Y' \subset Y$ such that conditions (1)-(3) in Proposition \ref{keypropB} hold almost surely along $Y'$ with $V = V_{K, Q}$, and hence $\tfrac{1}{n} \log |G_n| \to V_{K, Q}$ in $L^1_{loc}(\CC^d)$ almost surely along this subsequence. 	Hence $\tfrac{1}{n} \log |G_n| \to V_{K, Q}$ in probability over $\NN$, proving Theorem \ref{mainthd}. Moreover, the first bullet point in Corollary \ref{C:zero-currents} then follows from the weak*-continuity of the operator $dd^c$ in $L^1_{loc}(\CC^d)$. 
				
				For the second bullet in Corollary \ref{C:zero-currents}, letting $O$ be any weak*-open set of $(1, 1)$-currents, containing $dd^c V_{K, Q}$, continuity of $dd^c$ implies that $(dd^c)^{-1} O$ is an open set in $L^1_{loc}(\CC^d)$ containing $V_{K, Q}$. Applying Theorem \ref{mainthd} gives that 
				$$
				\lim_{n \to \infty} \PP(\tfrac{1}{n} \log |G_n| \in (dd^c)^{-1} O) = 1,
				$$
				yielding the second bullet. Finally, Theorem \ref{T:mainth-dim1} is the special case when our $(1, 1)$-currents can be identified with measures with the (now metrizable) weak* topology. Metrizability ensures that we can recast either of the equivalent bullet points in Corollary \ref{C:zero-currents} as convergence in probability.
				\end{proof}
				
				\section{Proof of Theorem \ref{02}}
				
					We will give the proof of Theorem \ref{02} under condition (1); i.e., where $K\subset \CC^d$ is compact and locally regular; $w=e^{-Q}$ is continuous on $K$; and $\tau$ is a finite measure on $K$ such that $(K,Q,\tau)$ satisfies a weighted Bernstein-Markov property (\ref{wbwin}). The proof in the case where we take a sequence $\{\mu_n\}$ of asymptotically weighted Bernstein-Markov measures for $K,Q$ is the same, mutatis mutandis.
				
				Theorem \ref{02} naturally splits into two cases, depending on the value of $V_{K, Q}$ at $z$. If $V_{K, Q}(z)$ attains its minimal value $\inf Q$, then the polynomials for which $\frac{1}{n}\log|p_{n,\alpha}(z)|$ is close to $V_{K, Q}(z)$ have low degree. In this case, we can show that $J_n(z, \epsilon)$ is large by directly showing that when $|\alpha| \ll n$, the function
				$\frac{1}{n}\log|p_{n,\alpha}(z)|$ is essentially flat and close to $\inf Q$ outside of a set of small measure.
				
				On the other hand, at points where $V_{K, Q}(z) > \inf Q$, the polynomials used in the approximation of $V_{K, Q}(z)$ will have large degree. Large degree orthonormal polynomials for arbitrary $K, Q$ are harder to control explicitly; our construction simply shows the existence of a collection of polynomials that ensure $J_n(z, \epsilon)$ is large. The rough idea is to use the orthogonality condition, as well as Proposition \ref{03}, to bootstrap the existence of a single $\alpha, \ |\alpha|\leq n$ with $\frac{1}{n}\log|p_{n,\alpha}(z)|$ close to $V_{K, Q}(z)$ -- guaranteed by Proposition \ref{locunif} -- to a collection of other $\beta, \ |\beta|\leq n$ with $\frac{1}{n}\log|p_{n,\beta}(z)|$ close to $V_{K, Q}(z)$.
				
				To understand why we need to introduce two cases, it may be useful to keep in mind the simplest case of the sequence of single-variable monomials $1, z, z^2, \dots$, which are the orthonormal polynomials for normalized arclength measure on the unit circle $K$ in the unweighted case $Q = 0$. 
				When $0 < |z| \le 1$, we have
				\begin{equation}
					\label{E:kac-example}
					\frac{1}{n} \log |z^{k_n}| \to V_{K, Q}(z) = \max( \log |z|, 0)
				\end{equation}
				for any sequence $k_n = o(n)$, whereas when $|z| \ge 1$, the convergence in \eqref{E:kac-example} holds whenever $k_n = n - o(n)$. The point $z = 0$ is the unique point where $J_n(z, \epsilon) \not \to \infty$ as $n \to \infty$ for some $\epsilon > 0$; this is the set $M$ in Theorem \ref{02}.
				
				We first deal with the case where $V_{K, Q}(z) = \inf Q$. 
				For this, we require a multivariate version of Cartan's estimate on the Lebesgue measure of the set where a polynomial can be small. We will use this estimate to show that for low degree polynomials, $\frac{1}n \log |p_{n, \alpha}(z)|$ is essentially flat.  The following lemma is Corollary 4.2 of \cite{BJZ}.
				
				\begin{lemma} \label{cartan2} Let $B$ be a closed ball. There exists $C=C(d)$ such that for any polynomial $p$ of degree $n$ with $\|p\|_B=1$ we have
					$$\frac{\mathcal L(\{z\in B:|p(z)|\leq \epsilon^n\})}{\mathcal L(B)}\leq C\epsilon^2$$
					for $0<\epsilon <1$. Here $\mathcal L$ denotes Lebesgue measure.
				\end{lemma}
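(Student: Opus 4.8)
The plan is to reduce the statement to complex dimension one by slicing with complex lines through a point at which $p$ is not too small, and to dispose of the one-variable case by the classical Boutroux--Cartan lemma after taming the normalization.

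\emph{Reductions.} Both $\mathcal L(\{z\in B:|p(z)|\le\epsilon^n\})/\mathcal L(B)$ and the hypothesis $\|p\|_B=1$ are unchanged when $p(z)$ is replaced by $p(az+b)$, since this preserves degrees and carries balls to balls; so we may assume $B=\bar{B(0,1)}$. We may also assume $\epsilon<1/4$, since for $\epsilon\ge 1/4$ the trivial bound gives $\mathcal L(\{|p|\le\epsilon^n\}\cap B)/\mathcal L(B)\le 1\le 16\epsilon^2$.

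\emph{The case $d=1$.} Write $p(z)=a\prod_{j=1}^m(z-a_j)$ with $m=\deg p\le n$. The Boutroux--Cartan lemma states that $\{\,\prod_j|z-a_j|\le H^m\,\}$ is covered by at most $m$ disks whose radii sum to at most $2eH$, so it would suffice to know that the relevant value $H=(\epsilon^n/|a|)^{1/m}$ is $O(\epsilon)$. This fails in general because $|a|$ can be exponentially small; but in that regime the offending zeros of $p$ lie far from $B$, so $|p|$ is in fact bounded below on $B$ and the sublevel set is correspondingly small. I would make this precise by separating the roots into those lying in $\bar{B(0,3)}$ and those outside, factoring $p=p_{\mathrm{near}}p_{\mathrm{far}}$: on $B$ the factor $|p_{\mathrm{far}}|$ varies by a ratio at most $2^{\#\mathrm{far}}$, which lets one replace the threshold $\epsilon^n$ for $p$ by a comparable threshold for $p_{\mathrm{near}}$; and since the roots of $p_{\mathrm{near}}$ lie within distance $4$ of $B$, its leading coefficient is at least $4^{-\#\mathrm{near}}\,\|p_{\mathrm{near}}\|_B$. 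Because $\#\mathrm{near}+\#\mathrm{far}\le n$, the resulting Boutroux--Cartan threshold for $p_{\mathrm{near}}$ is at most $(4\epsilon)^n$, hence $H\le 4\epsilon$ (using $\deg p_{\mathrm{near}}\le n$ and $4\epsilon<1$), and the covering then has total radius $O(\epsilon)$ and total area $O(\epsilon^2)$; the degenerate subcases are absorbed into the trivial bound. Rescaling, I record the form I need: if $D$ is a disk of radius $\rho\le 1$ and $q$ has degree $\le n$ with $\|q\|_D=\mu$, then $\mathcal L(\{z\in D:|q(z)|\le\delta^n\})\le C\rho^2\mu^{-2/n}\delta^2$ whenever $\delta^n<\mu$.

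\emph{Slicing for $d\ge 2$.} Since $V_{\bar{B(0,1/2)}}(z)=\log^+(2|z|)$ has supremum $\log 2$ on $\bar{B(0,1)}$, the Bernstein--Walsh inequality gives $\|p\|_{\bar{B(0,1/2)}}\ge 2^{-n}$; fix $z^\ast\in\bar{B(0,1/2)}$ with $|p(z^\ast)|\ge 2^{-n}$. For a unit vector $u$ in the sphere $S^{2d-1}\subset\CC^d$, let $\ell_u=z^\ast+\CC u$ and $D_u=\ell_u\cap\bar{B(0,1)}$, a disk of radius in $[\sqrt3/2,\,1]$ containing $z^\ast$. The polynomial $\zeta\mapsto p(z^\ast+\zeta u)$ has degree $\le n$ and sup-norm $\mu_u:=\|p\|_{D_u}\in[2^{-n},1]$ on $D_u$, so by the one-variable estimate above (with $\delta=\epsilon$, legitimate as $\epsilon^n<2^{-n}\le\mu_u$) the set $\{\zeta\in D_u:|p(z^\ast+\zeta u)|\le\epsilon^n\}$ has area at most $C\mu_u^{-2/n}\epsilon^2\le 4C\epsilon^2$. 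Finally, writing $E=\{z\in\bar{B(0,1)}:|p(z)|\le\epsilon^n\}$ and integrating in polar coordinates about $z^\ast$ (together with the rotation-invariance of surface measure $\sigma$ on $S^{2d-1}$),
\[
\mathcal L(E)=c_d\int_{S^{2d-1}}\Big(\int_{\{\zeta\,:\,z^\ast+\zeta u\,\in\,E\}}|\zeta|^{2d-2}\,dA(\zeta)\Big)\,d\sigma(u),
\]
and since $|\zeta|\le|z^\ast|+1\le 3/2$ on the inner domain, we get $\mathcal L(E)\le c_d\,(3/2)^{2d-2}\,4C\epsilon^2\,\sigma(S^{2d-1})$. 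Dividing by $\mathcal L(\bar{B(0,1)})$ yields the lemma with a constant $C(d)$.

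\emph{Main obstacle.} The genuinely delicate step is the one-variable case: the Boutroux--Cartan lemma is calibrated to monic polynomials, whereas our hypothesis normalizes the sup-norm, and a careless passage between the two costs a factor of $n$ in the measure. The near/far root splitting is precisely what keeps the effective threshold of the form $(C\epsilon)^n$ and hence the final bound of order $\epsilon^2$ rather than $n\epsilon^2$. Everything else --- the Bernstein--Walsh step, the uniform geometry of the slices (guaranteed by placing $z^\ast$ in the concentric half-ball), and the polar-coordinate identity --- is routine.
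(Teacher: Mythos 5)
Your proof is correct, and it takes a genuinely different route from the paper, which does not prove the lemma at all but quotes it as Corollary 4.2 of [BJZ] (Benelkourchi--Jennane--Zeriahi). There the estimate is deduced from a global uniform integrability (Polya-type) inequality for plurisubharmonic lemniscates: one compares $\tfrac1n\log\left(|p|/\|p\|_B\right)$ with suitably normalized functions in the Lelong class and invokes a uniform bound on $\int_B e^{-2u}$ over that class. Your argument is more elementary and self-contained: you reduce to one complex variable by slicing through a point $z^\ast$ where $|p(z^\ast)|\ge 2^{-n}$ (supplied by Bernstein--Walsh on the concentric half-ball), control each slice by the classical Boutroux--Cartan lemma, and reassemble with the integration-in-complex-lines formula. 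The genuinely delicate point --- that Cartan's lemma is calibrated to monic polynomials while the hypothesis normalizes the sup norm --- is handled correctly by your near/far root splitting: it keeps the effective Cartan threshold of the form $(4\epsilon)^{\#\mathrm{near}}$, so the exceptional disks have total radius $O(\epsilon)$ and total area $O(\epsilon^2)$, preserving the sharp exponent $2$ (which is attained, e.g., by $p(z)=z_1^n$). Two cosmetic remarks only: in the slicing step the effective one-variable parameter is $\epsilon\mu_u^{-1/n}\le 2\epsilon$, so the initial reduction should take $\epsilon<1/8$ rather than $1/4$ (this only changes the constant); and as stated the lemma is vacuous or false for $n=0$ (a unimodular constant), a degenerate case implicitly excluded both in [BJZ] and in the paper's application.
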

				
				\begin{remark} In the univariate case, we have the stronger classical Cartan's lemma: {\it for a monic polynomial $p$ of degree $n$ and any $h>0$,} 
					$$\mathcal L (\{z: |p(z)|\leq h^n\})\leq ch^2.$$
					
				\end{remark}
				
				\begin{lemma}
					\label{L:poly-small}
					Let $\alpha_n, n \in \mathbb N$ be any sequence with $|\alpha_n| = o(n/\log n)$. Then
					$$
					\lim_{n \to \infty} \frac{1}n \log |p_{n, \alpha_n}(z)| = \inf Q
					$$
					for (Lebesgue) almost every $z \in \mathbb C^d$ and
					$$
					\limsup_{n \to \infty} \frac{1}n \log |p_{n, \alpha_n}(z)| \le \inf Q
					$$
					for every $z \in \mathbb C^d$.
				\end{lemma}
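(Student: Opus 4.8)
The plan is to factor $p_{n,\alpha_n}=a_{n,\alpha_n}\tilde p_{n,\alpha_n}$ with $\tilde p_{n,\alpha_n}\in\mathcal P(\alpha_n)$ monic, and control the two factors separately. First I would pin down the leading coefficient. Since $\mathcal P(0)=\{1\}$, we have $T(n,0)=\|w^n\|_{L^2(\tau)}$, and applying the weighted Bernstein--Markov property \eqref{wbwin} to the constant polynomial gives $e^{-n\inf Q}=\|w^n\|_K\le M_nT(n,0)$, while trivially $T(n,0)\le e^{-n\inf Q}\tau(K)^{1/2}$; hence $\tfrac1n\log T(n,0)\to-\inf Q$. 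Because $|\alpha_n|=o(n/\log n)$ we have $\|\alpha_n-0\|_2\le|\alpha_n|=o(n)$, so Proposition \ref{03} (comparing the leading coefficients of $p_{n,\alpha_n}$ and $p_{n,0}$) yields $\tfrac1n\log|a_{n,\alpha_n}|=\tfrac1n\log|a_{n,0}|+o(1)=-\tfrac1n\log T(n,0)+o(1)\to\inf Q$. It then suffices to show $\tfrac1n\log|\tilde p_{n,\alpha_n}(z)|\to0$ for a.e.\ $z$ and $\limsup_n\tfrac1n\log|\tilde p_{n,\alpha_n}(z)|\le0$ for every $z$.

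For the upper bound at \emph{every} $z$: from \eqref{wbwin} and $\|w^np_{n,\alpha_n}\|_{L^2(\tau)}=1$ we get $\|w^np_{n,\alpha_n}\|_K\le M_n$, so $\|w^n\tilde p_{n,\alpha_n}\|_K=\|w^np_{n,\alpha_n}\|_K/|a_{n,\alpha_n}|\le e^{-n\inf Q+o(n)}$ by the first paragraph. Fix $z^*\in K$ with $Q(z^*)=\inf Q$; given $\delta>0$ choose $r>0$ with $Q\le\inf Q+\delta$ on $K_\delta:=K\cap\overline{B(z^*,r)}$. Then $\|\tilde p_{n,\alpha_n}\|_{K_\delta}\le e^{n(\inf Q+\delta)}\|w^n\tilde p_{n,\alpha_n}\|_{K_\delta}\le e^{n\delta+o(n)}$. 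Since $K$ is locally regular, $K_\delta$ is non-pluripolar, so $V^*_{K_\delta}\in L(\CC^d)$ is finite on all of $\CC^d$, and the Bernstein--Walsh inequality gives $|\tilde p_{n,\alpha_n}(z)|\le\|\tilde p_{n,\alpha_n}\|_{K_\delta}\exp(|\alpha_n|\,V^*_{K_\delta}(z))$. As $|\alpha_n|=o(n)$, we conclude $\limsup_n\tfrac1n\log|\tilde p_{n,\alpha_n}(z)|\le\delta$ for every $z$; letting $\delta\downarrow0$ finishes this part, and combining with the first paragraph gives the asserted pointwise $\limsup$.

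For the lower bound a.e.\ I would invoke the multivariate Cartan estimate, Lemma \ref{cartan2}. Using $e^{-2nQ}\le e^{-2n\inf Q}$ on $K$, $T(n,\alpha_n)=\|w^n\tilde p_{n,\alpha_n}\|_{L^2(\tau)}\le e^{-n\inf Q}\|\tilde p_{n,\alpha_n}\|_{L^2(\tau)}$, so $\tfrac1n\log\|\tilde p_{n,\alpha_n}\|_{L^2(\tau)}\ge\inf Q+\tfrac1n\log T(n,\alpha_n)\to0$; since $\|\tilde p_{n,\alpha_n}\|_{L^2(\tau)}\le\tau(K)^{1/2}\|\tilde p_{n,\alpha_n}\|_K$, this gives $\|\tilde p_{n,\alpha_n}\|_{\bar B}\ge\|\tilde p_{n,\alpha_n}\|_K\ge e^{-o(n)}$ for any fixed closed ball $\bar B\supseteq K$. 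Now for each $j\in\NN$, applying Lemma \ref{cartan2} to $\tilde p_{n,\alpha_n}/\|\tilde p_{n,\alpha_n}\|_{\bar B}$ (degree $|\alpha_n|$) and writing $e^{-n/j}/\|\tilde p_{n,\alpha_n}\|_{\bar B}=\epsilon_n^{|\alpha_n|}$ with $\epsilon_n\le e^{-n/(2j|\alpha_n|)}$ for large $n$,
$$\mathcal L\big(\{z\in\bar B:|\tilde p_{n,\alpha_n}(z)|<e^{-n/j}\}\big)\le C\,\epsilon_n^2\,\mathcal L(\bar B)\le C\,e^{-n/(j|\alpha_n|)}\,\mathcal L(\bar B).$$
Because $|\alpha_n|=o(n/\log n)$, the exponent dominates $\log n$, so these measures are summable in $n$; by Borel--Cantelli, for a.e.\ $z\in\bar B$ one has $|\tilde p_{n,\alpha_n}(z)|\ge e^{-n/j}$ for all large $n$, hence $\liminf_n\tfrac1n\log|\tilde p_{n,\alpha_n}(z)|\ge-1/j$. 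Intersecting over $j$ and exhausting $\CC^d$ by balls yields $\liminf_n\tfrac1n\log|\tilde p_{n,\alpha_n}(z)|\ge0$ a.e., and combining with the two previous paragraphs gives $\tfrac1n\log|p_{n,\alpha_n}(z)|\to\inf Q$ a.e.

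The main obstacle is the ``for every $z$'' upper bound: the naive estimate $\tfrac1n\log|p_{n,\alpha_n}(z)|\le\tfrac1{2n}\log B_n(z)\to V_{K,Q}(z)$ is too weak where $V_{K,Q}(z)>\inf Q$, and since $Q$ is only continuous one cannot globally pass the weight in and out of sup-norms without losing $e^{n(\sup Q-\inf Q)}$. The fix is to push the weight out only on a small neighbourhood of the minimizer of $Q$ (costing merely $e^{n\delta}$) and then propagate to all of $\CC^d$ by Bernstein--Walsh, which is cheap because $|\alpha_n|=o(n)$. A secondary technical point is converting the Cartan estimate---which only controls a bad set of relative size $\sim\epsilon^2$ for each fixed $n$---into an almost-everywhere statement; this requires the summability above, which is precisely where the hypothesis $|\alpha_n|=o(n/\log n)$, rather than merely $o(n)$, is used.
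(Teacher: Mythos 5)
Your proof is correct, but it identifies the level $\inf Q$ and obtains the everywhere upper bound by a genuinely different route than the paper. The almost-everywhere part is essentially the paper's argument: both apply the multivariate Cartan estimate (Lemma \ref{cartan2}) to the degree-$|\alpha_n|$ polynomial normalized by a sup norm on a ball and invoke Borel--Cantelli, with $|\alpha_n|=o(n/\log n)$ supplying summability. The divergence is in the normalization. The paper never factors out the leading coefficient: it proves $\liminf_n \tfrac1n\log\|p_{n,\alpha_n}\|_B\ge\inf Q$ directly from \eqref{wbwin}, bounds $\limsup_n\tfrac1n\log|p_{n,\alpha_n}(z)|\le V_{K,Q}(z)$ pointwise, and then squeezes: since $V_{K,Q}\le Q$ is within $\delta$ of $\inf Q$ on a set of positive Lebesgue measure near the minimizer of $Q$, where the Cartan flatness also holds a.e., the sup norm on $B$ must have exponential rate exactly $\inf Q$, and the trivial bound $|p_{n,\alpha_n}(z)|\le\|p_{n,\alpha_n}\|_B$ then yields the everywhere statement. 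You instead normalize by the leading coefficient $a_{n,\alpha_n}$, which forces you to invoke Proposition \ref{03} at the boundary direction $\tilde\theta=(1,0,\dots,0)$ of $\Sigma_{d+1}$, i.e., the full strength of the boundary directional Chebyshev result of \cite{BL} via \eqref{lim} --- machinery the paper reserves for Lemma \ref{L:general-case} and does not need for this lemma. Your everywhere upper bound via Bernstein--Walsh off the small cap $K_\delta$ (using local regularity at a minimizer of $Q$ and $\deg\tilde p_{n,\alpha_n}=o(n)$) is a sound alternative to the paper's squeeze. Both routes work; the paper's is more self-contained here, while yours makes the mechanism more transparent: the leading coefficient carries the rate $\inf Q$, and the monic part is asymptotically flat.
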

				
				\begin{proof}
					Fix a closed ball $B \subset \mathbb C^d$. We first claim that
					\begin{equation}
						\label{E:log-comp}
						\lim_{n \to \infty} \frac{1}n \log \frac{|p_{n, \alpha_n}(z)|}{\|p_{n, \alpha_n}\|_B} = 0 
					\end{equation}
					for almost every $z \in B$. Indeed, for any fixed $\epsilon > 0$, by Lemma \ref{cartan2}, we have
					$$
					\mathcal L \left\{z \in B : \frac{1}n \log \frac{|p_{n, \alpha_n}(z)|}{\|p_{n, \alpha_n}\|_B} \le -\epsilon \right\} \le \mathcal L(B)\cdot  Ce^{-2\epsilon n/|\alpha_n|}
					$$
					where the constant $C$ is as in this lemma. Since $|\alpha_n| = o(n/\log n)$, the right-hand side is summable in $n$ for all fixed $\epsilon > 0$. Therefore by the Borel-Cantelli lemma, 
					$$
					\liminf_{n \to \infty} \frac{1}n \log \frac{|p_{n, \alpha_n}(z)|}{\|p_{n, \alpha_n}\|_B} \ge - \epsilon
					$$
					for almost every $z \in B$. Since $\epsilon > 0$ was arbitrary, \eqref{E:log-comp} follows. 
					
					Now, for all $z \in \mathbb C^d$, from the definition of $V_{K, Q}$ we have
					$$
					\frac{1}{n} \log \frac{|p_{n, \alpha_n}(z)|}{\|p_{n, \alpha_n} e^{-nQ} \|_K} \le V_{K, Q}(z).
					$$
					Noting that 
					$\tfrac{1}n \log \|p_{n, \alpha_n} w^n \|_K \to 0$
					as $n \to \infty$ by the weighted Bernstein Markov property
					(\ref{wbwin}), we therefore have
					\begin{equation}
						\label{E:limsupp}
						\limsup_{n \to \infty} \frac{1}{n} \log |p_{n, \alpha_n}(z)| \le V_{K, Q}(z)
					\end{equation}
					for every $z \in \mathbb C^d$. On the other hand, letting $B$ be any closed ball containing $K$, we have
					$$
					\|p_{n, \alpha_n}\|_B \ge \|p_{n, \alpha_n}\|_K \ge e^{n \inf Q} \|p_{n, \alpha_n} e^{-nQ}\|_K = e^{n (\inf Q + o(1))},
					$$
					where the final bound again uses the weighted Bernstein-Markov property (\ref{wbwin}), thus
					\begin{equation}
						\label{E:liminff}
						\liminf_{n \to \infty} \frac{1}{n} \log \|p_{n, \alpha_n}\|_B \ge \inf Q.
					\end{equation}
					Now, since $Q$ is continuous and $V_{K, Q} \le Q$ on $K$, the only way \eqref{E:log-comp}, \eqref{E:limsupp}, and \eqref{E:liminff} can simultaneously hold is if 
					$$
					\lim_{n \to \infty} \frac{1}{n} \log |p_{n, \alpha_n}(z)| = \inf Q = \lim_{n \to \infty} \frac{1}{n} \log \|p_{n, \alpha_n}\|_B 
					$$ 
					for almost every $z \in B$. This first equality gives the first claim in the lemma, and the second equality gives the second claim.
				\end{proof}
				
				We can now quickly prove the $V_{K, Q}(z) = \inf Q$ case of Theorem \ref{02}.
				
				\begin{corollary}
					\label{C:theorem33-special}
					There exists a set $M \subset \mathbb C^d$ of Lebesgue measure $0$ such that for any $z \in M^c$ with $V_{K, Q}(z) = \inf Q$ and any $\epsilon > 0$, we have
					$$ 
					\lim_{n\to \infty} J_n(z,\epsilon) = \infty.
					$$
				\end{corollary}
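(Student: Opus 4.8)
The plan is to deduce the corollary from Lemma \ref{L:poly-small} by a countable union argument, exploiting the fact that at a point where $V_{K,Q}(z) = \inf Q$ the multi-indices witnessing a large value of $J_n(z,\epsilon)$ may be taken to be \emph{fixed} low-degree multi-indices, independent of $n$. In other words, when $V_{K,Q}(z)=\inf Q$ we do not need to search among high-degree polynomials at all.

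First I would, for each fixed $\alpha \in \NN^d$, apply Lemma \ref{L:poly-small} to the constant sequence $\alpha_n \equiv \alpha$ — this is legitimate for $n \ge |\alpha|$, since $|\alpha|$ is $O(1)$ and hence $o(n/\log n)$ — to obtain a Lebesgue-null set $M_\alpha \subset \CC^d$ with the property that $\tfrac1n \log |p_{n,\alpha}(z)| \to \inf Q$ for every $z \in M_\alpha^c$. I would then set $M := \bigcup_{\alpha \in \NN^d} M_\alpha$, which is a countable union of null sets and hence null.

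Next, I fix $z \in M^c$ with $V_{K,Q}(z) = \inf Q$, an $\epsilon > 0$, and an arbitrary $L \in \NN$, and take the first $L$ multi-indices $\alpha(1), \dots, \alpha(L)$ in the ordering $\prec$. For each $j \le L$ we have $z \notin M_{\alpha(j)}$, so $\tfrac1n \log |p_{n,\alpha(j)}(z)| \to \inf Q = V_{K,Q}(z)$; hence for all sufficiently large $n$ (depending on $z$, $\epsilon$, $L$) each of these $L$ distinct multi-indices satisfies both $|\alpha(j)| \le n$ and $\tfrac1n \log|p_{n,\alpha(j)}(z)| \ge V_{K,Q}(z) - \epsilon$, which by definition gives $J_n(z,\epsilon) \ge L$. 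Since $L$ was arbitrary, $\lim_{n\to\infty} J_n(z,\epsilon) = \infty$, proving the corollary.

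The substantive work is already packaged into Lemma \ref{L:poly-small}, which combines the multivariate Cartan estimate (Lemma \ref{cartan2}) with a Borel--Cantelli argument; so the only point that needs attention at this stage is that the exceptional null set must not depend on which low-degree multi-indices one uses, and this is exactly what taking the countable union $M = \bigcup_\alpha M_\alpha$ accomplishes. I do not expect any serious obstacle beyond this bookkeeping.
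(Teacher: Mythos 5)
Your proposal is correct and is essentially identical to the paper's proof: the paper also applies Lemma \ref{L:poly-small} to each fixed multi-index $\alpha(k)$ (a constant sequence, trivially satisfying $|\alpha_n|=o(n/\log n)$), takes $M$ to be the countable union of the resulting null sets, and concludes that for $z\in M^c$ with $V_{K,Q}(z)=\inf Q$ the first $L$ multi-indices all eventually contribute to $J_n(z,\epsilon)$.
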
 
				
				\begin{proof}
					Let $\alpha(0), \alpha(1), \dots$ be the sequence of multi-indices in $\mathbb N^d$, listed according to the total order $\prec$. For every $k$, let $M_k$ be the set of points $z \in \mathbb C^d$ where
					$$
					\lim_{n \to \infty} \frac{1}n \log |p_{n, \alpha(k)}(z)| \ne \inf Q,
					$$
					and set $M = \bigcup_{k=1}^\infty M_k$. The Lebesgue measure of each $M_k$ is $0$ by Lemma \ref{L:poly-small}, and so the Lebesgue measure of $M$ is also $0$. Moreover, the set $M$ satisfies the other condition of the corollary by construction.
				\end{proof}
				
				The remaining case of Theorem \ref{02} follows from the next lemma.
				
				\begin{lemma}
					\label{L:general-case}
					For any $z \in \mathbb C$ with $V_{K, Q}(z) > \inf Q$ and any $\epsilon > 0$, we have
					$$ 
					\lim_{n\to \infty} J_n(z,\epsilon) = \infty.
					$$
				\end{lemma}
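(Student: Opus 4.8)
\emph{The plan is to argue directly} (no contradiction is needed): for each large $n$ we exhibit a $\prec$-decreasing chain of multi-indices $\beta_1 \succ \beta_2 \succ \cdots \succ \beta_{L_n}$, all lying in the ``good set'' $G_n := \{\alpha : |\alpha| \le n,\ \tfrac1n \log|p_{n,\alpha}(z)| \ge V_{K,Q}(z) - \epsilon\}$, with $L_n \to \infty$; this forces $J_n(z,\epsilon) = \#G_n \to \infty$. Write $V := V_{K,Q}(z)$; we may assume $\epsilon < V - \inf Q$, since $J_n(z,\epsilon)$ is nondecreasing in $\epsilon$. Let $\beta_1$ maximise $|p_{n,\alpha}(z)|$ over $|\alpha| \le n$. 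As $B_n(z)/m_n \le |p_{n,\beta_1}(z)|^2 \le B_n(z)$ and $\tfrac1n \log m_n \to 0$, Proposition \ref{locunif} gives $|p_{n,\beta_1}(z)| = e^{n(V + o(1))}$, so $\beta_1 \in G_n$ for $n$ large. Moreover, by the second assertion of Lemma \ref{L:poly-small} applied with $\alpha_n \equiv 0$, $\limsup_n \tfrac1n\log|p_{n,0}(z)| \le \inf Q < V - \epsilon$, so $0 \notin G_n$ for $n$ large; hence every element of $G_n$ has a nonzero coordinate.

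\emph{Iteration step.} Suppose $\beta_s \in G_n$, and pick a coordinate $j$ with $(\beta_s)_j \ge 1$. Set $\gamma := \beta_s - e_j$. By the defining property of $\prec$ we have $z_j \tilde p_{n,\gamma} \in \mathcal P(\beta_s)$; expanding it in the orthonormal basis $\{p_{n,\mu}\}$ and matching the coefficient of $z^{\beta_s}$ yields $z_j \tilde p_{n,\gamma} = \tilde p_{n,\beta_s} + r_s$ with $r_s = \sum_{\mu \prec \beta_s} d_\mu p_{n,\mu}$. Since $\|\tilde p_{n,\alpha} w^n\|_{L^2(\tau)} = 1/|a_{n,\alpha}|$ and $\|z_j\|_K \le R := \max_i \sup_{\zeta \in K}|\zeta_i|$, the triangle inequality together with Proposition \ref{03} (which gives $|a_{n,\beta_s}|/|a_{n,\gamma}| = e^{o(n)}$, uniformly over all nearest-neighbour pairs $\beta, \beta - e_i$ --- see below) gives $\|r_s w^n\|_{L^2(\tau)} \le e^{o(n)}/|a_{n,\beta_s}|$. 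Evaluating $z_j \tilde p_{n,\gamma} = \tilde p_{n,\beta_s} + r_s$ at $z$, and using $\tilde p_{n,\beta_s}(z) = p_{n,\beta_s}(z)/a_{n,\beta_s} \ne 0$ (as $\beta_s \in G_n$), at least one of $|z_j \tilde p_{n,\gamma}(z)|$, $|r_s(z)|$ is $\ge \tfrac12 |\tilde p_{n,\beta_s}(z)|$. If the former holds (so $z_j \ne 0$), set $\beta_{s+1} := \gamma \prec \beta_s$; then $|p_{n,\beta_{s+1}}(z)| = |a_{n,\gamma}||\tilde p_{n,\gamma}(z)| \ge \tfrac{|a_{n,\gamma}|}{2|z_j||a_{n,\beta_s}|}|p_{n,\beta_s}(z)| = \tfrac{e^{o(n)}}{2|z_j|}|p_{n,\beta_s}(z)|$. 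If instead $|r_s(z)| \ge \tfrac12|\tilde p_{n,\beta_s}(z)|$, then by Cauchy--Schwarz and $\sum_\mu |d_\mu|^2 = \|r_s w^n\|_{L^2(\tau)}^2$ (Parseval), $\sum_{\mu \prec \beta_s}|p_{n,\mu}(z)|^2 \ge |r_s(z)|^2 / \|r_s w^n\|_{L^2(\tau)}^2 \ge e^{-o(n)}|p_{n,\beta_s}(z)|^2$; choosing $\beta_{s+1} \prec \beta_s$ to maximise $|p_{n,\mu}(z)|$ over $\{\mu \prec \beta_s\}$ and using that there are at most $m_n$ such $\mu$ gives $|p_{n,\beta_{s+1}}(z)| \ge e^{-o(n)}|p_{n,\beta_s}(z)|$. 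In either case $\beta_{s+1} \prec \beta_s$ and $|p_{n,\beta_{s+1}}(z)| \ge e^{-\psi(n)}|p_{n,\beta_s}(z)|$ for one fixed sequence $\psi(n) = o(n)$ depending only on $n, d, K, Q, z$.

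\emph{Conclusion, and the main obstacle.} Iterating from $\beta_1$, $|p_{n,\beta_s}(z)| \ge |p_{n,\beta_1}(z)| e^{-(s-1)\psi(n)} = e^{nV - o(n) - (s-1)\psi(n)}$, which stays $\ge e^{n(V-\epsilon)}$ --- so $\beta_s \in G_n$ and we may take another step --- as long as $s \le L_n := 1 + \lfloor (n\epsilon - o(n))/\psi(n) \rfloor$. As $\psi(n) = o(n)$, $L_n \to \infty$, and the $\beta_s$ are pairwise distinct (strictly $\prec$-decreasing), so $J_n(z,\epsilon) \ge L_n \to \infty$, proving the lemma (and, with Corollary \ref{C:theorem33-special}, Theorem \ref{02}). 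The step requiring care is the uniform form of Proposition \ref{03}: one needs $\sup\{ |\log|a_{n,\beta - e_i}| - \log|a_{n,\beta}|| : |\beta| \le n,\ (\beta)_i \ge 1 \} = o(n)$, which follows from Proposition \ref{03} by extracting, for each $n$, a maximising pair $(\beta, i)$ and feeding the resulting sequence of nearest-neighbour pairs (of $\ell^2$-distance $1 = o(n)$) into that proposition. Granting this, $\psi(n)$ is genuinely $o(n)$ and the deficit accumulation is harmless; the genuinely new ingredient is the dichotomy in the iteration step --- multiplication by a coordinate either produces a $\prec$-smaller good index directly, or produces an error $r_s$ that is large at $z$, in which case orthonormality of $\{p_{n,\mu}\}$ forces a $\prec$-smaller good index anyway.
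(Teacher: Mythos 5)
Your proof is correct, and its engine is the same as the paper's: take a good index $\beta$, decrement a nonzero coordinate, multiply $\tilde p_{n,\beta-e_j}$ by $z_j$, expand in the orthonormal basis, and control the leading-coefficient ratio via Proposition \ref{03} to produce a $\prec$-smaller good index. The difference is in the global bookkeeping. The paper runs an induction on the number $k$ of good indices, applying the inductive hypothesis at level $\epsilon/2$ and producing the $(k+1)$st index at level $\epsilon$; since $k$ is fixed before $n \to \infty$, only the plain sequence-along-$n$ form of Proposition \ref{03} is needed and no deficit accumulates. You instead build a single chain of length $L_n \to \infty$ inside each $n$, which forces the per-step loss $\psi(n)=o(n)$ to be uniform over all steps; this requires the uniform nearest-neighbour form of Proposition \ref{03}, which you correctly derive by extracting a maximizing pair for each $n$ (the pairs have $\ell^2$-distance $1=o(n)$, so the proposition applies and gives uniformity by contradiction). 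Your dichotomy (main term versus remainder, with Parseval and Cauchy--Schwarz on the remainder) replaces the paper's pointwise triangle inequality with individually bounded coefficients $|\lambda_\beta|\le c_K$; both work, and the degenerate situations ($z_j=0$ in Case 1, $r_s\equiv 0$) are handled. The payoff of your version is quantitative: you obtain $J_n(z,\epsilon)\ge L_n \approx n\epsilon/\psi(n)$, a rate which the paper's induction does not provide. The auxiliary steps (base case via $B_n$ and Proposition \ref{locunif}, exclusion of $\beta=0$ via Lemma \ref{L:poly-small}, reduction to $\epsilon < V_{K,Q}(z)-\inf Q$ by monotonicity of $J_n$ in $\epsilon$) are all sound.
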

				
				\begin{proof}[Proof of Theorem \ref{02} given Lemma \ref{L:general-case}]
					The constant function $\inf Q$ belongs to $L(\mathbb C^d)$; thus $V_{K, Q} \ge \inf Q$ in $\CC^d$.
					Taking $M$ as in Corollary \ref{C:theorem33-special}, the theorem follows by combining Corollary \ref{C:theorem33-special} with Lemma \ref{L:general-case}.
				\end{proof}
				
				\begin{proof}[Proof of Lemma \ref{L:general-case}]
					Fix $z \in \mathbb C$ with $V_{K, Q}(z) > \inf Q$. To prove the lemma, it is enough to show that for all $k \in \mathbb N$,
					\begin{equation}
						\label{E:epsilon-0}
						\liminf_{n \to \infty} J_n(z, \epsilon) \ge k \qquad \text{ for all } \epsilon > 0.
					\end{equation}
					We prove \eqref{E:epsilon-0} by induction on $k$. For the base case when $k = 1$, observe that
					$$
					\max_{\alpha : |\alpha| \le n} \frac{1}n \log |p_{n, \alpha}(z)| \ge \frac{1}{2n} \log \sum_{\alpha: |\alpha| \le n} |p_{n, \alpha}(z)|^2 - \frac{1}{2n} \log \binom{n+d}{d}.
					$$
					The first term on the right hand side converges to $V_{K, Q}(z)$ as $n \to \infty$ by Proposition \ref{locunif} and the second term on the right-hand side above converges to $0$. Moreover, using the same argument as in (\ref{E:limsupp})
					$$V_{K,Q}(z)\geq \limsup_{n\to \infty} \bigl(\max_{\alpha : |\alpha| \le n} \frac{1}n \log |p_{n, \alpha}(z)|\bigr),$$
					and so
					$$ \lim_{n\to \infty}\bigl( \max_{\alpha : |\alpha| \le n} \frac{1}n \log |p_{n, \alpha}(z)|\bigr)=V_{K,Q}(z).$$ 
					Thus for any fixed $\epsilon > 0$, $J_n(z, \epsilon) \ge 1$ for all large enough $n$.
					
					Now suppose that \eqref{E:epsilon-0} holds for $k \ge 1$. We prove it for $k + 1$. Fix $\epsilon > 0$ small enough so that 
					$V_{K, Q}(z) - \epsilon > \inf Q$. By the inductive hypothesis, for all large enough $n$ we can find indices $\alpha_i = \alpha_i(z, n)$ with $|\alpha_i|\leq n$ and 
					$$
					\alpha_k \prec \alpha_{k-1}\prec \dots \prec \alpha_1 
					$$
					such that for all $i \in \{1, \dots, k\}$ we have
					\begin{equation}
						\label{eightbis}
						\frac{1}n \log |p_{n, \alpha_i}(z)| \ge V_{K, Q}(z) - \epsilon/2 > \inf Q.
					\end{equation}
					As long as $n$ was chosen large enough, Lemma \ref{L:poly-small} and the second inequality in \eqref{eightbis} ensure that $\alpha_k=(\alpha_{k,1},...,\alpha_{k,d})  \ne (0, \dots, 0)$. Therefore we can find $j \in \{1, \dots, d\}$ such that $\alpha_{k, j} \ge 1$. Let $e_j = (0, \dots, 1, \dots, 0)$ be the $j$th coordinate vector, set $\alpha_k^- =\alpha_k^-(z,n):= \alpha_k - e_j$, and define a polynomial
					$$
					q_{n,\alpha_k}(z):=z_jp_{n,\alpha_k^-}(z).
					$$
					We write $\beta \preceq \alpha$ if $\beta \prec \alpha$ or $\beta = \alpha$. From our hypothesis on $\prec$ that $\alpha \prec \beta \implies \alpha + e_j \prec \beta + e_j$, we can expand $q_{n,\alpha_k}$ as a linear combination of $p_{n,\beta}$ with $\beta \preceq \alpha_k$:
					$$
					q_{n,\alpha_k}:=\sum_{\beta \preceq \alpha_k} \lambda_\beta p_{n,\beta}.
					$$
					Then
					$$
					\sqrt {\sum_{\beta \preceq \alpha_k} |\lambda_{\beta}|^2} = \|q_{n,\alpha_k}\|_{L^2(w^{2n}\tau)}\leq[ \max_{z\in K}|z_j|] \cdot \|p_{n,\alpha_k^-}\|_{L^2(w^{2n}\tau)}=\max_{z\in K}|z_j|
					$$
					and so each $|\lambda_\beta|$ is bounded above by the constant $c_K := \max_{z\in K}|z_j|$. Next, 
					$$
					\lambda_{\alpha_k} p_{n,\alpha_k}=q_{n, \alpha_k}-\sum_{\beta \prec \alpha_k} \lambda_\beta p_{n,\beta}
					$$
					and so applying the triangle inequality at $z$ we have
					\begin{align*}
						|\lambda_{\alpha_k} p_{n,\alpha_k}(z)|&\leq |q_{n,\alpha_k}(z)|+ \sum_{\beta \prec \alpha_k} |\lambda_\beta| \cdot |p_{n,\beta}(z)| \\
						&\leq (|z_j|+|\lambda_{\alpha_k^-}|)|p_{n,\alpha_k^-}(z)|+\sum_{\beta \prec \alpha_k, \beta \ne \alpha_k^-} |\lambda_{\beta} | \cdot |p_{n,\beta}(z)|.
					\end{align*}
					Therefore by the above inequality, letting $c_{K, z} := c_K + |z_j|$ we have
					$$
					|\lambda_{\alpha_k} p_{n,\alpha_k}(z)|\leq c_{K, z}\sum_{\beta \prec \alpha_k} |p_{n,\beta}(z)|.
					$$
					Next, observe that the leading coefficient of $q_{n,\alpha_k}$, i.e., the coefficient of $z^{\alpha_k}$, is equal to $\lambda_{\alpha_k}$ times the leading coefficient $a_{n,\alpha_k}$ of $p_{n,\alpha_k}$. Moreover, it is also the leading coefficient $a_{n,\alpha_k^-}$ of $z_jp_{n,\alpha_k^-}$. Thus 
					$$
					\lambda_{\alpha_k}= \frac{a_{n,\alpha_k^-}}{a_{n,\alpha_k}}
					$$
					and our estimate above on $|\lambda_{\alpha_k} p_{n,\alpha_k}(z)|$ becomes
					\begin{equation}
						\label{10bis}
						\begin{split}
							\frac{|a_{n,\alpha_k^-}|}{|a_{n,\alpha_k}|}|p_{n,\alpha_k}(z)|&\leq c_{K, z}\sum_{\beta \prec \alpha_k} |p_{n,\beta}(z)| \\
							&\le c_{K, z} \binom{n + d}{d} \max_{\beta \prec \alpha_k} |p_{n,\beta}(z)|.
						\end{split} 
					\end{equation}
					Taking logs and then combining \eqref{10bis} with \eqref{eightbis} gives
					$$
					\max_{\beta \prec \alpha_k} \frac{1}n \log |p_{n,\beta}(z)| \ge V_{K, Q}(z) - \epsilon/2 - \frac{1}n \log \binom{n+d}{n} + \frac{1}n \log \frac{|a_{n,\alpha_k^-(z, n)}|}{|a_{n,\alpha_k(z, n)}|}. 
					$$
					By Proposition \ref{03}, the final term above tends to $0$ as $n \to \infty$. Therefore for all large enough $n$, there exists $\alpha_{k+1} \prec \alpha_k$ such that 
					$$
					\frac{1}n \log |p_{n,\alpha_{k+1}}(z)| \ge V_{K, Q}(z) - \epsilon,
					$$
					and so $J_n(z, \epsilon) \ge k+1$. Since $\epsilon > 0$ was arbitrary, this completes the inductive step in \eqref{E:epsilon-0}.
				\end{proof}
				
				\section{Appendix}
				
				  We give a self-contained proof of a special $\CC^2$ case of the main result in \cite{BL} which is all that is needed for the proof of Proposition \ref{three} and hence the univariate version of Theorem \ref{mainthd} and hence Theorem \ref{T:mainth-dim1}. 
				
				\begin{theorem} \label{maint} For $K\subset \CC^2$ compact, regular, and complete circular, if $\theta \in \Sigma_2 \setminus \Sigma_2^0=\{(0,1),(1,0)\}$, 
					$$\tau(K,\theta)= \lim_{\frac{\alpha}{|\alpha|}\to \theta}\tau_{\alpha}(K);$$ i.e., the limit exists. Moreover, $\theta \to \tau(K,\theta)$ is continuous on $\Sigma_2$.
				\end{theorem}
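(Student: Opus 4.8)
\emph{Proof plan for Theorem \ref{maint}.}

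The plan is to prove both assertions at the vertex $\theta=(1,0)$; the vertex $\theta=(0,1)$ is treated identically after interchanging $z_1$ and $z_2$, and the claimed continuity of $\tau(K,\cdot)$ on all of $\Sigma_2$ then follows by gluing these to the already-known continuity on $\Sigma_2^0$. Since $\|p\|_K=\|p\|_{\hat K}$ for every polynomial $p$, $V_{\hat K}=V_K$, and $\hat K$ is again compact, regular and complete circular, we may and do assume $K$ is polynomially convex. Put $W_1:=\|z_1\|_K$ and $W_2:=\|z_2\|_K$, both positive since $K$ is non-pluripolar. Because $K$ is complete circular, for each $\zeta\in\CC$ the fibre $\{t\in\CC:(t,t\zeta)\in K\}$ is a closed disc about the origin, of radius $w(\zeta):=\sup\{|t|:(t,t\zeta)\in K\}\in[0,W_1]$, and $\sup_{\zeta\in\CC}w(\zeta)=W_1$.

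First I would reduce $\tau_\alpha(K)$ to a one-variable weighted Chebyshev problem. Averaging $p\in\mathcal P_2(\alpha)$ over the circle, $p\mapsto\frac1{2\pi}\int_0^{2\pi}p(e^{i\phi}\,\cdot\,)e^{-i|\alpha|\phi}\,d\phi$, extracts the degree-$|\alpha|$ homogeneous part $q$ of $p$; it again lies in $\mathcal P_2(\alpha)$ and, since $e^{i\phi}K=K$, satisfies $\|q\|_K\le\|p\|_K$. Hence $\tau_\alpha(K)^{|\alpha|}$ is the infimum of $\|q\|_K$ over homogeneous $q\in\mathcal P_2(\alpha)$. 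Writing $\alpha=(a,b)$ and $n=a+b$, the condition $\beta\prec\alpha$ among degree-$n$ multi-indices is $\beta_2<b$, so such a $q$ factors as $q=z_1^{a}Q(z_1,z_2)$ with $Q$ homogeneous of degree $b$ and monic in $z_2$. Setting $\zeta=z_2/z_1$ and $r(\zeta):=Q(1,\zeta)$ — an arbitrary monic polynomial of degree $b$ — and using the fibrewise description of $K$ (for $z_1\ne0$, the slice $z_1=0$ contributing nothing once $a\ge1$), one computes $\|q\|_K=\sup_{\zeta\in\CC}w(\zeta)^{n}|r(\zeta)|$, and therefore
$$
\tau_\alpha(K)^{|\alpha|}=\inf\bigl\{\,\|w^{\,|\alpha|}r\|_{L^\infty(\CC)}\;:\;r\ \text{monic},\ \deg r=b\,\bigr\}.
$$
The right side is finite because $w(\zeta)\le W_1$ and $w(\zeta)|\zeta|\le W_2$, so $w(\zeta)^{n}|r(\zeta)|\to0$ as $\zeta\to\infty$ whenever $b<n$.

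Now let $\alpha/|\alpha|\to(1,0)$, i.e.\ $b/n\to0$ (with $n=|\alpha|$). For the upper bound take the competitor $r(\zeta)=\zeta^{b}$, equivalently $p=z^\alpha$: then $\tau_\alpha(K)\le\|z^\alpha\|_K^{1/n}\le (W_1^{a}W_2^{b})^{1/n}\to W_1$. For the matching lower bound I will use that $w$ is continuous (see the last paragraph). Granting that, fix $\zeta^*$ with $w(\zeta^*)=W_1$ and, for given $\epsilon>0$, a radius $\delta>0$ with $w\ge W_1-\epsilon$ on the disc $D(\zeta^*,\delta)$. A monic polynomial of degree $b$ has supremum norm at least $\delta^{b}$ on any closed disc of radius $\delta$ (by Cauchy's estimate applied to its leading coefficient), so $\|w^{\,n}r\|_{L^\infty(\CC)}\ge(W_1-\epsilon)^{n}\delta^{b}$ for every admissible $r$, whence $\tau_\alpha(K)\ge(W_1-\epsilon)\,\delta^{b/n}\to W_1-\epsilon$. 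Letting $\epsilon\downarrow0$ gives $\tau_\alpha(K)\to W_1$; in particular the directional Chebyshev constant $\tau(K,(1,0))=\lim_{\alpha/|\alpha|\to(1,0)}\tau_\alpha(K)$ exists and equals $W_1$. Moreover this convergence is uniform in the sense that for each $\epsilon>0$ there are $\delta>0$, $N$ with $|\tau_\alpha(K)-W_1|<\epsilon$ whenever $|\alpha|\ge N$ and $\alpha/|\alpha|$ lies within $\delta$ of $(1,0)$; feeding this into the identity $\tau(K,\theta')=\lim_{\alpha/|\alpha|\to\theta'}\tau_\alpha(K)$ valid on $\Sigma_2^0$ yields $\tau(K,\theta')\to W_1$ as $\theta'\to(1,0)$. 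Thus $\tau(K,\cdot)$, extended by $W_1$ at $(1,0)$ and (symmetrically) by $W_2$ at $(0,1)$, is continuous at both vertices; with continuity on $\Sigma_2^0$ this yields continuity on all of $\Sigma_2$.

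The one substantial point is the continuity of the fibre-radius function $w$, and this is precisely where regularity is indispensable: for a non-regular complete circular set with a one-dimensional ``spike'' one checks from the displayed formula that $\tau_\alpha(K)$ genuinely oscillates as $\alpha/|\alpha|\to(1,0)$ (for instance $\tau_{(m,0)}(K)$ equals the spike length while $\tau_{(m,1)}(K)$ does not), so a hypothesis beyond compactness is unavoidable. Now $w$ is automatically upper semicontinuous since $K$ is closed, so the task is lower semicontinuity. The plan is to invoke the standard description of the extremal function of a polynomially convex complete circular set: with
$$
h_K(z):=\sup_{k\ge1}\ \sup\bigl\{\,|p(z)|^{1/k}\;:\;p\ \text{homogeneous of degree }k,\ \|p\|_K\le1\,\bigr\},
$$
one has $K=\{z:h_K(z)\le1\}$, $h_K$ is logarithmically homogeneous, and $V_K=\log^{+}h_K^{*}$; hence regularity of $K$ (continuity of $V_K$) forces $h_K$ to be continuous and $\equiv1$ on $\partial K$. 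Since $h_K(1,\zeta)\ge\max(W_1^{-1},|\zeta|W_2^{-1})>0$, the identity $w(\zeta)=1/h_K(1,\zeta)$ then displays $w$ as a continuous function on $\CC$ with $w(\zeta)\to0$ as $\zeta\to\infty$. Establishing $K=\{h_K\le1\}$, the identity $w=1/h_K(1,\cdot)$, and the continuity of $h_K$ — all via the circular-averaging device used above together with the continuity of $V_K$ — is the technical heart of the proof.
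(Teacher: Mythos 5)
Your treatment of the vertex $(1,0)$ is essentially sound and is a genuinely different, arguably cleaner route than the paper's for that vertex: the paper goes through $\liminf_{\theta'\to(1,0),\,\theta'\in\Sigma_2^0}\tau(K,\theta')$, Zaharjuta's Lemma 3, and an inscribed ellipsoid (plus a shear of coordinates when $\max_K|z|$ is not attained on the slice $w=0$), whereas you bound $\tau_\alpha(K)$ directly by slicing the complete circular set along lines through the origin and reducing to a one-variable Chebyshev problem against the weight $w(\zeta)^{|\alpha|}$. Your deferred claim that regularity forces continuity of the fibre-radius function is the same ``no pluripolar spikes'' input that the paper extracts from the ellipsoid construction of \cite{B-IUMJ}, and your uniform two-sided sandwich $(W_1-\epsilon)\delta^{b/n}\le\tau_\alpha(K)\le W_1(W_2/W_1)^{b/n}$ correctly yields both the existence of the limit and the continuity at that vertex.

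The fatal problem is your first sentence: the vertex $(0,1)$ is \emph{not} obtained from $(1,0)$ by interchanging $z_1$ and $z_2$, because the ordering $z^n\prec z^{n-1}w\prec\cdots\prec w^n$ breaks that symmetry. Your own reduction shows $\tau_{(a,b)}(K)^{n}=\inf\{\|w^{n}r\|_{L^\infty(\CC)}:r\ \text{monic},\ \deg r=b\}$ with $n=a+b$; at $(1,0)$ one has $b=o(n)$, which is exactly why the factor $\delta^{b/n}\to1$ and a single Cauchy estimate suffice. At $(0,1)$ one has $b=n-o(n)$: the class $\mathcal P((0,n))$ contains \emph{all} homogeneous degree-$n$ polynomials monic in $w^n$, and the problem becomes a weighted Chebyshev problem with degree comparable to $n$, whose asymptotics are governed by weighted potential theory rather than by an elementary disc bound. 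Interchanging the variables maps $\tau_{(0,n)}(K)$ to the Chebyshev constant of the class ``monic in $z^n$ with all other degree-$n$ monomials permitted'' on the swapped set, which is not $\tau_{(n,0)}$ of anything in your framework. This is precisely the vertex at which the paper must invoke the internal-ellipsoid results of \cite{B-IUMJ} (Proposition 2.1 and Theorem 3.2 there) to identify $\lim_{m\to\infty}\tau_{(0,m)}(K)$ with $\liminf_{\theta'\to(0,1),\,\theta'\in\Sigma_2^0}\tau(K,\theta')$ before upgrading to the full limit via Zaharjuta's Lemma 3. As written, your proposal proves only half of the theorem; you need a separate argument (or an explicit appeal to such a result) for $(0,1)$.
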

				
				We begin with some general facts from \cite{Z} for an arbitrary compact set $K\subset \CC^d$. From \cite{Z}, it is known that for $\theta_d \in \Sigma_d^0$, the limit
				$$\tau(K,\theta):= \lim_{\frac{\alpha}{|\alpha|}\to \theta}\tau_{\alpha}(K)$$
				exists and $\theta \to \tau(K,\theta)$ is continuous on $\Sigma_d^0$ where we recall that $\tau_{\alpha}(K):=\inf \{\|p\|_K:p\in \mathcal P_d(\alpha)\}^{1/|\alpha|}$. Moreover, recall that for circled (and hence complete circular) sets we only need consider homogeneous polynomials in $\mathcal P_d(\alpha)$. For $\theta \in \Sigma_d \setminus \Sigma_d^0$, we define
				\begin{equation} \label{bdrydirl} \tau(K,\theta):= \limsup_{\frac{\alpha}{|\alpha|}\to \theta}\tau_{\alpha}(K) \end{equation}
				and  Lemma 3 in \cite{Z} shows that for $\theta \in \Sigma_d \setminus \Sigma_d^0$ one has  
				\begin{equation}\label{liminf} \tau_{-}(K,\theta):=\liminf_{\frac{\alpha}{|\alpha|}\to \theta}\tau_{\alpha}(K)= \liminf_{\theta'\to \theta, \ \theta'\in \Sigma_d^0}\tau(K,\theta').\end{equation}
				
			\begin{proof}[Proof of Theorem \ref{maint}]
				Let $K\subset \CC^2$ be regular. We write $\Sigma:=\Sigma_2$ and $\Sigma^0:=\Sigma_2^0$. Here we order the monomials as usual: 
				$$1,z,w,...,z^n,z^{n-1}w,...,w^n,... $$ and we write $\alpha =(a,b)$. We first take $(0,1)\in \Sigma \setminus \Sigma^0$ and we want to show
				$$\lim_{{\alpha \over |\alpha|}\to (0,1)}\tau_{\alpha}(K)$$
				exists. The first observation is that 
					\begin{equation} \label{biumj} \lim_{n\to \infty}\tau_{(0,n)}(K) \ \hbox{exists and equals} \  \liminf_{\theta \to (0,1), \ \theta \in \Sigma^0}\tau(K,\theta).\end{equation}
					This was shown by the first author in \cite{B-IUMJ} (see Proposition 2.1 and Theorem 3.2). This uses an ``internal ellipsoid'' argument which we will also use below in the case $(1,0) \in \Sigma \setminus \Sigma^0$.

				Thus we need to consider all sequences with $\theta_j:=\frac{\alpha_j}{|\alpha_j|}\to (0,1)$ and show 
				$\lim_{j\to \infty}\tau_{\alpha_j}(K)$ exists and equals $\lim_{n\to \infty}\tau_{(0,n)}(K)$. 
				Given such a sequence, we can write
				$$ \theta_j=(\epsilon_j,1-\epsilon_j) \ \hbox{and} \ \alpha_j=n_j\theta_j$$
				where $n_j=|\alpha_j|$. Note that $n_j \epsilon_j$ is an integer. Let 
				$$p_{0,n_j(1-\epsilon_j)}(z,w):=w^{n_j(1-\epsilon_j)}\pm \cdots$$
				be the Chebyshev polynomial among polynomials of this type; i.e., $\|p_{0,n_j(1-\epsilon_j)}\|_K^{1/n_j(1-\epsilon_j)}= \tau_{(0,n_j(1-\epsilon_j))}(K)$. Then 
				$$q_{n_j\epsilon_j,n_j(1-\epsilon_j)}(z,w):=z^{n_j\epsilon_j}\cdot p_{0,n_j(1-\epsilon_j)}(z,w)= w^{n_j(1-\epsilon_j)}z^{n_j\epsilon_j}  + \cdots$$
				is a competitor for the Chebyshev polynomial in this class, i.e., 
				$$\|q_{n_j\epsilon_j,n_j(1-\epsilon_j)}\|_K^{1/n_j} \geq  \tau_{(n_j\epsilon_j,n_j(1-\epsilon_j))}(K).$$
				Thus
				$$\tau_{(n_j\epsilon_j,n_j(1-\epsilon_j))}(K)\leq \|q_{n_j\epsilon_j,n_j(1-\epsilon_j)}\|_K^{1/n_j} \leq \tau_{(0,n_j(1-\epsilon_j))}(K)^{1-\epsilon_j}\cdot \max_{(z,w)\in K} |z|^{\epsilon_j}.$$
				Since we can assume $\max_{(z,w)\in K} |z|>0$ ($K$ is not pluripolar),
				$$ \limsup_{j\to \infty}  \tau_{(n_j\epsilon_j,n_j(1-\epsilon_j))}(K) \leq \limsup_{n_j\to \infty}\tau_{(0,n_j(1-\epsilon_j))}(K)=\limsup_{n\to \infty}\tau_{(0,n)}(K).$$   
				On the other hand, using (\ref{biumj}) and (\ref{liminf}),
				$$\liminf_{j\to \infty}\tau_{(n_j\epsilon_j,n_j(1-\epsilon_j))}(K)\geq \liminf_{\frac{\alpha}{|\alpha|}\to (0,1)}\tau_{\alpha}(K)$$
				$$=\liminf_{\theta \to (0,1), \ \theta \in \Sigma^0}\tau(K,\theta)=\lim_{n\to \infty}\tau_{(0,n)}(K)$$
				so that, in fact, 
				$$\lim_{j\to \infty}\tau_{(n_j\epsilon_j,n_j(1-\epsilon_j))}(K) \ \hbox{exists and equals} \ \lim_{n\to \infty}\tau_{(0,n)}(K)$$
				for any sequence $\{\alpha_j=n_j\theta_j =(n_j\epsilon_j,n_j(1-\epsilon_j))\}$ with $|\alpha_j|\to \infty$ and $\theta_j:=\frac{\alpha_j}{|\alpha_j|}\to (0,1)$.
				
				The above argument did not require $K$ to be complete circular. But to get the analogous result at the other boundary point, $(1,0)$, we will assume that $K$ is complete circular; and since $||p||_K=||p||_{\hat K}$ for any polynomial $p$ we may assume $K=\hat K$ (recall (\ref{phull})). Thus the intersection of $K$ with any complex line through the origin is a closed disk centered at the origin in this line, and each disk has positive radius since $K$ is not pluripolar.
				
				First, to compute $\tau_{(n,0)}(K)$, we minimize sup norms on $K$ over the {\it single} homogeneous polynomial $z^n$: thus $\tau_{(n,0)}(K)=\max_{(z,w)\in K} |z|$ and 
				$$\lim_{n\to \infty}\tau_{(n,0)}(K)=\max_{(z,w)\in K} |z|$$
				(which is positive since $K$ is not pluripolar). Take any sequence $\alpha_j$ with $|\alpha_j|\to \infty$ and $\theta_j:=\frac{\alpha_j}{|\alpha_j|}\to (1,0)$. Clearly we can write
				$$\theta_j=(1-\epsilon_j,\epsilon_j) \ \hbox{and} \ \alpha_j=n_j\theta_j$$
				(where $n_j=|\alpha_j|$). We consider $\tau_{(n_j(1-\epsilon_j),n_j\epsilon_j)}(K)$ where we minimize sup norms on $K$ over the homogeneous polynomials of the form
				$$z^{n_j(1-\epsilon_j)}w^{n_j\epsilon_j}+...+ a_nz^n=z^{n_j(1-\epsilon_j)}(w^{n_j\epsilon_j}+...+a_nz^{n_j\epsilon_j});$$
				i.e., the competitors for $\tau_{(n_j(1-\epsilon_j),n_j\epsilon_j)}(K)$ are precisely $z^{n_j(1-\epsilon_j)}$ times the competitors for $\tau_{(0,n_j\epsilon_j)}(K)$. Hence
				$$\tau_{(n_j(1-\epsilon_j),n_j\epsilon_j)}(K)^{n_j}\leq \tau_{(0,n_j\epsilon_j)}(K)^{n_j\epsilon_j}\cdot [\max_{(z,w)\in K} |z|^{n_j(1-\epsilon_j)}]$$
				showing that 
				$$\limsup_{j\to \infty}\tau_{(n_j(1-\epsilon_j),n_j\epsilon_j)}(K)\leq \limsup_{j\to \infty} \max_{(z,w)\in K} |z|^{1-\epsilon_j}\cdot \tau_{(0,n_j\epsilon_j)}(K)^{\epsilon_j}$$
				$$=\max_{(z,w)\in K} |z|=\lim_{n\to \infty}\tau_{(n,0)}(K).$$
				Thus we have shown that 
				\begin{equation}\label{Rlim} \limsup_{\frac{\alpha}{|\alpha|}\to (1,0)}\tau_{\alpha}(K)\leq \max_{(z,w)\in K} |z|=\lim_{n\to \infty}\tau_{(n,0)}(K).\end{equation}

				For a reverse inequality (with $\liminf$), we use the hypothesis that $K$ is complete circular. Suppose first that for $K_0:=K\cap \{w=0\}=\{(z,0)\in K\}$ we have
				$$\max\{|z|: (z,0) \in K_0\}=\max_{(z,w)\in K}|z|=:R.$$
				Recall $\lim_{n\to \infty}\tau_{(n,0)}(K)=\max_{(z,w)\in K}|z|=R$. We can use interior approximation by an ellipsoid to get the lower bound for the $(1,0)$ Chebshev constant of $K$. To be precise, for $r<R$, the closed disk 
				$$\{(z,0):|z|\leq r\}$$
				is contained in the interior of $K$ and hence for some constant $A=A(r)$ the ellipsoid
				$$E=E(A,r):=\{(z,w): |z|^2/r^2 + |w|^2/A^2 \leq 1\}$$
				is contained in the interior of $K$ (cf., the proof of Theorem 3.2 in \cite{B-IUMJ}). Thus for all $\theta \in \Sigma^0$ we have 
				$$\tau(K,\theta) \geq \tau(E,\theta).$$
				Explicit calculation (cf., Example 3.1 in \cite{B-IUMJ}) gives that
				$$\lim_{\theta \to (1,0), \ \theta \in \Sigma^0} \tau(E,\theta)= r.$$
				Hence
				$$\liminf_{\theta \to (1,0), \ \theta \in \Sigma^0} \tau(K,\theta)\geq r$$
				for all $r<R$ and hence 
				$$\liminf_{\theta \to (1,0), \ \theta \in \Sigma^0} \tau(K,\theta)\geq R=\lim_{n\to \infty}\tau_{(n,0)}(K).$$
				By rescaling, we may assume $R=1$. Now suppose that 
				$$R=1=\max\{|z|: (z,0) \in K_0\} < \max_{(z,w)\in K}|z|=:|c|.$$
				Let the right-hand-side be attained at the point $(c,d)\in K$. Consider the change of coordinates
				$(z',w'):=(z,w- (d/c)z)$ and let $K'$ be the image of $K$. The Chebyshev constants of $K$ and $K'$ are the same as this transformation preserves the classes of polynomials used to define
				the directional Chebyshev constants. But now for $K'_0:=K'\cap \{w=0\}=\{(z,0)\in K'\}$ we have
				$$|c|=\max\{|z|: (z,0) \in K'_0\}=\max_{(z,w)\in K'}|z|$$
				and we are back in the first case. Thus we have shown that
				$$\liminf_{\theta \to (1,0), \ \theta \in \Sigma^0} \tau(K,\theta)\geq R=\lim_{n\to \infty}\tau_{(n,0)}(K).$$
				From (\ref{liminf}), $\liminf_{\frac{\alpha}{|\alpha|}\to (1,0)}\tau_{\alpha}(K)= \liminf_{\theta \to (1,0), \ \theta \in \Sigma^0} \tau(K,\theta)$ so that 
				$$\liminf_{\frac{\alpha}{|\alpha|}\to (1,0)}\tau_{\alpha}(K)\geq R.$$
				On the other hand, 
				(\ref{Rlim}) gave us 
				$$R=\lim_{n\to \infty}\tau_{(n,0)}(K)\geq \limsup_{\frac{\alpha}{|\alpha|}\to (1,0)}\tau_{\alpha}(K).$$
				Putting these inequalities together gives us that
				$$\lim_{\frac{\alpha}{|\alpha|}\to (1,0)}\tau_{\alpha}(K)$$
				exists and equals $R=\lim_{n\to \infty}\tau_{(n,0)}(K)$.
				
				The continuity of $\theta \to \tau(K,\theta)$ on $\Sigma$ follows immediately from the theorem and the continuity of this function on $\Sigma^0$.
				\end{proof}

			\end{document}